\documentclass[english]{elsarticle}
\usepackage[T1]{fontenc}
\usepackage[latin9]{inputenc}
\usepackage{geometry}
\geometry{verbose,tmargin=1.5cm,bmargin=1.5cm,lmargin=1.5cm,rmargin=1.5cm}
\usepackage{xcolor}
\usepackage{pdfcolmk}
\usepackage{babel}
\usepackage{amsthm}
\usepackage{amstext}
\usepackage{amssymb}
\usepackage{mathdots}
\usepackage{graphicx}
\PassOptionsToPackage{normalem}{ulem}
\usepackage{ulem}
\usepackage[unicode=true,
 bookmarks=true,bookmarksnumbered=false,bookmarksopen=false,
 breaklinks=false,pdfborder={0 0 1},backref=false,colorlinks=false]
 {hyperref}

\makeatletter

\newcommand{\lyxdot}{.}

\providecolor{lyxadded}{rgb}{0,0,1}
\providecolor{lyxdeleted}{rgb}{1,0,0}

  \theoremstyle{plain}
  \newtheorem*{thm*}{\protect\theoremname}
\theoremstyle{plain}
\newtheorem{thm}{\protect\theoremname}[section]
  \theoremstyle{plain}
  \newtheorem{cor}[thm]{\protect\corollaryname}
  \theoremstyle{plain}
  \newtheorem{prop}[thm]{\protect\propositionname}


\makeatletter
\let\eqref\relax
\usepackage{amsmath}
\usepackage{mathtools}

\usepackage{bbm}

\@ifundefined{showcaptionsetup}{}{%
 \PassOptionsToPackage{caption=false}{subfig}}
\usepackage{subfig}
\makeatother

  \providecommand{\corollaryname}{Corollary}
  \providecommand{\propositionname}{Proposition}
  \providecommand{\theoremname}{Theorem}
\providecommand{\theoremname}{Theorem}

\begin{document}

\begin{frontmatter}{}

\title{A numerical framework for computing steady states of size-structured
population models and their stability}

\author{Inom Mirzaev}

\ead{mirzaev@colorado.edu}

\author{David M. Bortz}

\ead{dmbortz@colorado.edu}

\address{Department of Applied Mathematics, University of Colorado, Boulder,
CO, 80309-0526, United States}
\begin{abstract}
Structured population models are a class of general evolution equations
which are widely used in the study of biological systems. Many theoretical
methods are available for establishing existence and stability of
steady states of general evolution equations. However, except for
very special cases, finding an analytical form of stationary solutions
for evolution equations is a challenging task. In the present paper,
we develop a numerical framework for computing approximations to stationary
solutions of general evolution equations, which can \emph{also} be
used to produce existence and stability regions for steady states.
In particular, we use the Trotter-Kato Theorem to approximate the
infinitesimal generator of an evolution equation on a finite dimensional
space, which in turn reduces the evolution equation into a system
of ordinary differential equations. Consequently, we approximate and
study the asymptotic behavior of stationary solutions. We illustrate
the convergence of our numerical framework by applying it to a linear
Sinko-Streifer structured population model for which the exact form
of the steady state is known. To further illustrate the utility of
our approach, we apply our framework to nonlinear population balance
equation, which is an extension of well-known Smoluchowksi coagulation-fragmentation
model to biological populations. We also demonstrate that our numerical
framework can be used to gain insight about the theoretical stability
of the stationary solutions of the evolution equations. Furthermore,
the open source Python program that we have developed for our numerical
simulations is freely available from our Github repository \emph{(github.com/MathBioCU}). \end{abstract}
\begin{keyword}
Stationary solutions, numerical stability analysis, nonlinear evolution
equations, population balance equations, size-structured population
model, Trotter-Kato Theorem 
\end{keyword}

\end{frontmatter}{}

\section{Introduction}

Many natural phenomena can be formulated as the differential law of
the development (evolution) in time of a physical system. The resulting
differential equation combined with boundary conditions affecting
the system are called \emph{evolution equations}. Evolution equations
are a popular framework for studying the dynamics of biological populations.
For example, they have proven useful in understanding the dynamics
of biological invasions \citep{schreiber2011invasion}, bacterial
flocculation in activated sludge tanks \citep{biggs2002modelling},
and the spread of parasites and diseases \citep{gourley2008spatiotemporal}.
Since many biological populations converge to a time-independent state,
many researchers have used theoretical tools to investigate long-term
behavior of these models. Analytical and fixed point methods have
been used to show the existence of stationary solutions to size-structured
population models \citep{mirzaev2015stability,farkas2012steadystates}
and semigroup theoretic methods have been used to investigate the
stability of these stationary solutions \citep{farkas2007stability,mirzaev2015criteria,banasiak2011blowupof}.
For many models in the literature, the principle of linearized stability
\citep{webb1985theoryof,kato1995aprinciple} can be used to show that
the spectral properties of the infinitesimal generator (IG) of the
linearized semigroup determines the stability or instability of a
stationary solution. Moreover, using compactness arguments, spectral
properties of the infinitesimal generator can be determined from the
point spectrum of the IG, which in turn can be written as the roots
of a characteristic equation.

Despite this theoretical development, the derived existence and stability
conditions are oftentimes rather complex, and accordingly the biological
interpretation of these conditions can be challenging. To overcome
this difficulty several numerical methods for stability analysis of
structured population models have been developed \citep{breda2004computing,engelborghs2002numerical,kirkilionis2001numerical,deroos2010numerical}.
For instance, Diekmann et al. \citep{kirkilionis2001numerical,diekmann2003steadystate,deroos2010numerical}
present a numerical method for equilibrium and stability analysis
of physiologically structured population models (PSPM) or life history
models, where individuals are characterized by a (finite) set of physiological
characteristics (traits such as age, size, sex, energy reserves).
In this method a PSPM is first written as a system of integral equations
coupled with each other via interaction (or feedback) variables. Consequently,
equilibria and stability boundary of the resulting integral equations
are numerically approximated using curve tracing methods combined
with numerical integration of the ODE. Later, de Roos \citep{deroos2008demographic}
presented the modification of the curve tracing approach to compute
the demographic characteristics (such as population growth rate, reproductive
value, etc) of a linear PSPM. For a fast and accurate software for
theoretical analysis of PSPMs we refer interested reader to a software
package by de Roos \citep{deroos2014pspmanalysis}. An alternative
method for stability analysis of physiologically structured population
models, developed by Breda and coworkers \citep{breda2013anumerical,breda2005pseudospectral,breda2006solution},
uses a pseudospectral approach to compute eigenvalues of a discretized
infinitesimal generator. This method (also known as infinitesimal
generator (IG) approach) has been employed to produce bifurcation
diagrams and stability regions of many different linear evolution
equations arising in structured population modeling \citep{breda2005pseudospectral,breda2006pseudospectral,breda2009numerical}.
Unfortunately, not all structured population models fit into the framework
of PSPMs and thus there is a need for a more general numerical framework
for asymptotic analysis of structured population models.

In this paper we develop a numerical framework to guide theoretical
analysis of structured population models. We demonstrate that our
methodology can be used for numerical computation and stability analysis
of positive stationary solutions of both linear and nonlinear size-structured
population models. Moreover, we illustrate the utility of our framework
to produce existence and stability regions for steady states of size-structured
population models. We also provide an open source Python program used
for the numerical simulations in our Github repository \citep{mirzaev2015steadystate}.
Although, the examples presented in this paper are size-structured
population models, in Section \ref{sec:Theoretical-Framework}, we
show that the framework is applicable to more general evolution equations. 

The main idea behind the numerical framework is first to write a structured
population model in the form of an evolution equation and then use
well-known Trotter-Kato Theorem \citep{trotter1958approximation,kato1959remarks}
to approximate the infinitesimal generator of the evolution equation
on a finite dimensional space. This in turn allows one to approximate
solutions (or spectrum) of the evolution equation with the solution
(or spectrum) of system of differential equations. Consequently, we
approximate the stationary solutions of an actual model with stationary
solutions of the approximate infinitesimal generator on a finite dimensional
space. Local stability of the approximate steady states are then computed
from the spectrum of the Jacobian of ODE system evaluated at this
steady states. Our method is similar to the IG approach in \citep{breda2005pseudospectral,breda2006pseudospectral,breda2009numerical},
in a sense that we also approximate infinitesimal generator and analyze
the spectrum of the resulting operator to produce existence and stability
regions. However, in contrast to IG approach, our framework also computes
actual steady states and is easily applied to nonlinear evolution
equations arising in structured population dynamics. 

The rest of the paper is structured as follows. We describe the theoretical
development of our framework for general evolution equations in Section
\ref{sub:Approximation-scheme}. Note that readers with more biological
background can skip Section \ref{sub:Approximation-scheme} and directly
jump into the application of the framework in Section \ref{sub:Numerical-convergence-results}.
In Section \ref{sub:Numerical-convergence-results}, we illustrate
the convergence of the approximation method by applying it to linear
Sinko-Streifer model, for which the exact form of the stationary solutions
is known. To further illustrate the utility of our approach, in Section
\ref{sec:Application-to-nonlinear}, we apply our framework to a nonlinear
size-structure population model (also known as population balance
equations in the engineering literature) described in \citep{banasiak2009coagulation,bortz2008klebsiella}.
Moreover, in Section \ref{sec:Establishing-local-stability}, we show
that local stability conditions for a stationary solution can be derived
from the spectral properties of the approximate infinitesimal generator.
Finally, we conclude with some remarks and a summary of our work in
Section 5.

\section{\label{sec:Theoretical-Framework}Numerical Framework}

In this section, we demonstrate our numerical methodology for general
evolution equations. We first present the numerical scheme used to
approximate the infinitesimal generator of a semigroup. Subsequently,
in Section \ref{sub:Numerical-convergence-results}, we illustrate
the convergence of our approach by applying it to linear Sinko-Streifer
equations, for which exact stationary solutions are known.

\subsection{\label{sub:Approximation-scheme}Infinitesimal generator approximation}

Given a Banach space $\mathcal{X}$, consider an abstract evolution
equation,
\begin{equation}
u_{t}=\mathcal{F}(u),\qquad u(0,\bullet)=u_{0}\in\mathcal{X}\,,\label{eq:abstract evolution equation}
\end{equation}
where $\mathcal{F}\,:\,\mathcal{D}(\mathcal{F})\subseteq\mathcal{X}\to\mathcal{X}$
is some operator defined on its domain $\mathcal{D}(\mathcal{F})$
and $u_{0}$ is an initial condition at time $t=0$. Note that any
boundary condition belonging to a given partial differential equation
can be included in the domain $\mathcal{D}(\mathcal{F})$. The solution
to (\ref{eq:abstract evolution equation}) can be related to the initial
state $u_{0}$ by some transition operator $T(t)$ so that 
\[
u(t,\,x)=T(t)u_{0}(x)\,.
\]
The transition operator $T(t)$ is said to be strongly continuous
semigroup (or simply $C_{0}$ -semigroup) if satisfies the following
three conditions:
\begin{enumerate}
\item $T(s)T(t)=T(s+t)$ for all $s,\,t\ge0$
\item $T(0)=I$, the identity operator on $\mathcal{X}$
\item For each fixed $u_{0}\in\mathcal{X}$,
\[
\lim_{t\to0^{+}}\left\Vert T(t)u_{0}-u_{0}\right\Vert =0\,.
\]

\end{enumerate}
Moreover, showing that the operator $\mathcal{F}$ generates a $C_{0}$-semigroup
is equivalent to establishing well-posedness of the abstract evolution
equation given in (\ref{eq:abstract evolution equation}). In general,
finding the explicit form of the transition operator is challenging.
Hence, approximation methods are used to study a more complicated
evolution equation and the semigroups they generate. One of the famous
approximation theorems is due to Trotter \citep{trotter1958approximation}
and Kato \citep{kato1959remarks} (see \citep{kato1976perturbation}
for the classical literature on the approximation of generators of
semigroups). The goal is to construct approximating generators $\mathcal{F}_{n}$
on the approximate spaces $\mathcal{X}_{n}$ such that $C_{0}$-semigroups
$T_{n}(\cdot)$ generated by $\mathcal{F}_{n}$ approximate the $C_{0}$-semigroup
$T(t)$ generated by $\mathcal{F}$. Although there are multiple ways
to approximate the infinitesimal generator $\mathcal{F}$, for our
purposes we use the approximation scheme based on Galerkin-type projection
of the state space $\mathcal{X}$ \citep{banks1989transformation,ito1998thetrotterkato,ackleh1997modeling}.
For the convenience of readers, we will summarize the approximation
scheme here. 

Let $\mathcal{X}_{n}$, $n=1,2,\dots$ be a sequence of subspaces
of $\mathcal{X}$ with $\dim(\mathcal{X}_{n})=n$ and define projections
$\pi_{n}\,:\,\mathcal{X}\to\mathcal{X}_{n}$ and canonical injections
$\iota_{n}\,:\,\mathcal{X}_{n}\to\mathcal{X}$. For each subspace
$\mathcal{X}_{n}$ we choose basis elements $\left\{ \beta_{i}^{n}\right\} _{i=1}^{n}$
such that each element $v$ of subspace $\mathcal{X}_{n}$ can be
written in the form $v=\sum_{i=1}^{n}\alpha_{i}\beta_{i}^{n}$. Moreover,
for each subspace $\mathcal{X}_{n}$ we define the bijective mappings
$p_{n}\,:\,\mathcal{X}_{n}\to\mathbb{R}^{n}$ by 
\[
p_{n}v=(\alpha_{1},\cdots,\alpha_{n})^{T}
\]
 and the norm on $\mathbb{R}^{n}$ by 
\[
\left\Vert v\right\Vert _{\mathbb{R}^{n}}=\left\Vert p_{n}^{-1}v\right\Vert _{\mathcal{X}}\,.
\]
Consequently, we define bounded linear operators $P_{n}\,:\,\mathcal{X}\to\mathbb{R}^{n}$
and $E_{n}\,:\,\mathbb{R}^{n}\to\mathcal{X}$ by 
\begin{equation}
P_{n}v=p_{n}\pi_{n}v,\quad v\in\mathcal{X}\label{eq: space to reals}
\end{equation}
and 
\begin{equation}
E_{n}z=\iota_{n}p_{n}^{-1}z,\quad z\in\mathbb{R}^{n}\,,\label{eq:reals to space}
\end{equation}
respectively. Finally, we define approximate operators $\mathcal{F}_{n}$
on $\mathbb{R}^{n}$ by
\begin{equation}
\mathcal{F}_{n}(z)=P_{n}\mathcal{F}\left(E_{n}z\right)\label{eq:approximate LHS}
\end{equation}
for all $z\in\mathbb{R}^{n}$. 

Accordingly, the Trotter-Kato Theorem states that the semigroup generated
by the discrete operator $\mathcal{F}_{n}$ converges to the semigroup
generated by the infinitesimal generator $\mathcal{F}$. For the convenience
of the reader, we state the theorem here and refer readers to \citep{ito1998thetrotterkato}
for a proof.
\begin{thm*}
(Trotter-Kato) Let $\left(T(t)\right)_{t\ge0}$ and $\left(T_{n}(t)\right)_{t\ge0}$,
$n\in\mathbb{N}$, be strongly continuous semigroups on $\mathcal{X}$
and $\mathbb{R}^{n}$ with generators $\mathcal{F}$ and $\mathcal{F}_{n}$,
respectively. Furthermore, assume that they satisfy the estimate
\[
\left\Vert T(t)\right\Vert _{\mathcal{X}},\,\left\Vert T_{n}(t)\right\Vert _{\mathbb{R}^{n}}\le Me^{wt}\qquad\text{for all }t\ge0,\,n\in\mathbb{N}\,,
\]
for some constants $M\ge1,\,w\in\mathbb{R}$. Then the following are
equivalent:
\begin{enumerate}
\item There exists a $\lambda_{0}\in\rho(\mathcal{F})\cap\bigcap\limits _{i=1}^{n}\rho(\mathcal{F}_{i})$
such that for all $v\in\mathcal{X}$
\[
\left\Vert E_{n}\left(\lambda_{0}I_{n}-\mathcal{F}_{n}\right)^{-1}P_{n}v-\left(\lambda_{0}I-\mathcal{F}\right)^{-1}v\right\Vert _{\mathcal{X}}\to0\quad\text{as }n\to\infty\,.
\]

\item For all $v\in\mathcal{X}$ and $t\ge0$,
\[
\left\Vert E_{n}T_{n}(t)P_{n}v-T(t)v\right\Vert _{\mathcal{X}}\to0
\]
as $n\to\infty$, uniformly on compact $t$ intervals.
\end{enumerate}
\end{thm*}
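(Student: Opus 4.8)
The plan is to exploit the Laplace-transform duality between a $C_{0}$-semigroup and the resolvent of its generator. Writing $R(\lambda)=(\lambda I-\mathcal{F})^{-1}$ and $R_{n}(\lambda)=(\lambda I_{n}-\mathcal{F}_{n})^{-1}$, the Hille--Yosida theory together with the standing estimate $\left\Vert T(t)\right\Vert ,\,\left\Vert T_{n}(t)\right\Vert \le Me^{wt}$ yields, for every $\lambda$ with $\operatorname{Re}\lambda>w$, the representations
\[
R(\lambda)v=\int_{0}^{\infty}e^{-\lambda t}T(t)v\,dt,\qquad R_{n}(\lambda)z=\int_{0}^{\infty}e^{-\lambda t}T_{n}(t)z\,dt,
\]
together with the uniform resolvent bound $\left\Vert R(\lambda)\right\Vert ,\,\left\Vert R_{n}(\lambda)\right\Vert \le M/(\operatorname{Re}\lambda-w)$. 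This uniform control, combined with the boundedness of $E_{n},P_{n}$, the identity $P_{n}E_{n}=I_{n}$, and the consistency $E_{n}P_{n}v\to v$ inherent in the Galerkin scheme of Section \ref{sub:Approximation-scheme}, is the engine that licenses every interchange of limits below.

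For the implication $(2)\Rightarrow(1)$ I would insert $E_{n}$ and $P_{n}$ into the Laplace representations and estimate, for any fixed $\lambda_{0}$ with $\operatorname{Re}\lambda_{0}>w$,
\[
\left\Vert E_{n}R_{n}(\lambda_{0})P_{n}v-R(\lambda_{0})v\right\Vert _{\mathcal{X}}\le\int_{0}^{\infty}e^{-(\operatorname{Re}\lambda_{0})t}\left\Vert E_{n}T_{n}(t)P_{n}v-T(t)v\right\Vert _{\mathcal{X}}\,dt.
\]
The integrand tends to $0$ pointwise in $t$ by hypothesis $(2)$ and is dominated by a fixed multiple of $e^{-(\operatorname{Re}\lambda_{0}-w)t}\left\Vert v\right\Vert _{\mathcal{X}}$; dominated convergence then delivers $(1)$, in fact at every such $\lambda_{0}$.

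The reverse implication $(1)\Rightarrow(2)$ is the substantial part and proceeds in two stages. First I would \emph{spread} the single-point convergence to the whole half-plane $\{\operatorname{Re}\lambda>w\}$: the resolvent identity expands $R(\lambda)=\sum_{k\ge0}(\lambda_{0}-\lambda)^{k}R(\lambda_{0})^{k+1}$ on a disk about $\lambda_{0}$ whose radius is bounded below \emph{uniformly in} $n$ by the resolvent estimate. Since $P_{n}E_{n}=I_{n}$ gives $E_{n}R_{n}(\lambda_{0})^{k+1}P_{n}=\left(E_{n}R_{n}(\lambda_{0})P_{n}\right)^{k+1}$, and strong limits of uniformly bounded operators are stable under products, each power converges, $E_{n}R_{n}(\lambda_{0})^{k+1}P_{n}\to R(\lambda_{0})^{k+1}$; summing propagates $E_{n}R_{n}(\lambda)P_{n}\to R(\lambda)$ first to the disk and then, by connectedness of the half-plane, to all of it. Second, I would recover the semigroups from their resolvents through the exponential formula
\[
T(t)v=\lim_{k\to\infty}\left[\frac{k}{t}\,R\!\left(\frac{k}{t}\right)\right]^{k}v,
\]
together with its analogue for the matrix exponential $T_{n}(t)=e^{t\mathcal{F}_{n}}$, and combine the two limits in a three-epsilon argument: pick $k$ so large that each exponential approximant lies within $\varepsilon$ of its semigroup uniformly in $n$, then send $n\to\infty$ using the resolvent convergence just established.

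The main obstacle is exactly this last interchange of the $k\to\infty$ and $n\to\infty$ limits. Pointwise resolvent convergence at the nodes $k/t$ does not by itself transfer to the $k$-th powers and to the $k$-limit; one must show that $\left[\frac{k}{t}\,R_{n}\!\left(\frac{k}{t}\right)\right]^{k}$ converges \emph{uniformly in} $n$, which again rests on the uniform resolvent and semigroup bounds and on first restricting to the dense domain $\mathcal{D}(\mathcal{F})$, where the exponential formula comes with a quantitative rate. Upgrading fixed-$t$ convergence to uniformity on compact $t$-intervals then demands an equicontinuity estimate for $t\mapsto E_{n}T_{n}(t)P_{n}v$, which follows on $\mathcal{D}(\mathcal{F})$ from $\tfrac{d}{dt}T_{n}(t)z=\mathcal{F}_{n}T_{n}(t)z$ and the uniform bounds, and extends to all of $\mathcal{X}$ by density.
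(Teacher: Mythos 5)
First, a point of reference: the paper does not prove this theorem at all. It is quoted for the reader's convenience, and the proof is explicitly deferred to Ito and Kappel \citep{ito1998thetrotterkato}; your attempt can therefore only be compared with that standard proof. Your direction $(2)\Rightarrow(1)$ coincides with the classical argument (Laplace representation of the resolvent plus dominated convergence) and is correct, granted the uniform bounds on $E_{n},P_{n}$ and the identity $P_{n}E_{n}=I_{n}$ that the Galerkin construction supplies. For $(1)\Rightarrow(2)$ you genuinely depart from the cited proof. Your first stage (spreading the resolvent convergence over the half-plane via Neumann series and the product trick $E_{n}R_{n}(\lambda_{0})^{k+1}P_{n}=(E_{n}R_{n}(\lambda_{0})P_{n})^{k+1}$) is sound, but the classical proof then never approximates the semigroups by powers of resolvents. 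Writing $R(\lambda_{0})=(\lambda_{0}I-\mathcal{F})^{-1}$ and $R_{n}(\lambda_{0})=(\lambda_{0}I_{n}-\mathcal{F}_{n})^{-1}$, it differentiates $s\mapsto T_{n}(t-s)R_{n}(\lambda_{0})P_{n}T(s)R(\lambda_{0})v$ and integrates, which yields the Duhamel-type identity
\[
R_{n}(\lambda_{0})P_{n}T(t)R(\lambda_{0})v-T_{n}(t)R_{n}(\lambda_{0})P_{n}R(\lambda_{0})v=\int_{0}^{t}T_{n}(t-s)\bigl[P_{n}R(\lambda_{0})-R_{n}(\lambda_{0})P_{n}\bigr]T(s)v\,ds\,;
\]
the integrand vanishes uniformly on compact $t$-intervals because $E_{n}P_{n}R(\lambda_{0})-E_{n}R_{n}(\lambda_{0})P_{n}\to0$ strongly and the orbit $\left\{ T(s)v\,:\,0\le s\le t_{f}\right\} $ is compact, and density of $\mathcal{D}(\mathcal{F})=R(\lambda_{0})\mathcal{X}$ finishes. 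This route buys exactly what your scheme has to fight for: uniformity in $t$ comes for free, and there is no double limit to interchange.

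That interchange is where your proposal has a genuine gap. The quantitative backward-Euler estimate you invoke for the approximating semigroups, schematically $\bigl\Vert\bigl[\tfrac{k}{t}R_{n}(\tfrac{k}{t})\bigr]^{k}z-T_{n}(t)z\bigr\Vert\le C(t,M,w)\,k^{-\theta}\,\Vert\mathcal{F}_{n}z\Vert$ (or with $\Vert\mathcal{F}_{n}^{2}z\Vert$, depending on the rate used), is uniform in $n$ only on vectors $z\in\mathbb{R}^{n}$ whose \emph{discrete} smoothness $\Vert\mathcal{F}_{n}z\Vert$ is bounded uniformly in $n$. Restricting $v$ to $\mathcal{D}(\mathcal{F})$ and taking $z=P_{n}v$, as you propose, does not provide this: nothing in the hypotheses controls $\Vert\mathcal{F}_{n}P_{n}v\Vert$ in terms of $\Vert\mathcal{F}v\Vert$ (the absence of any such consistency between $\mathcal{F}_{n}P_{n}$ and $P_{n}\mathcal{F}$ is precisely what makes the theorem nontrivial), so the ``uniformly in $n$'' step fails as written. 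The same defect sits in your closing equicontinuity argument, which again bounds the time derivative by $\Vert\mathcal{F}_{n}P_{n}v\Vert$. The repair stays inside your own toolkit: replace $P_{n}v$ by the resolvent-smoothed vectors $z_{n}=R_{n}(\lambda_{0})P_{n}w$ (or $R_{n}(\lambda_{0})^{2}P_{n}w$), for which $\mathcal{F}_{n}z_{n}=\lambda_{0}R_{n}(\lambda_{0})P_{n}w-P_{n}w$ is bounded uniformly in $n$ by the uniform resolvent estimate, while $E_{n}z_{n}\to R(\lambda_{0})w=v$ by your stage-one convergence; moreover $\Vert P_{n}v-z_{n}\Vert_{\mathbb{R}^{n}}\le\Vert E_{n}P_{n}v-v\Vert_{\mathcal{X}}+\Vert v-E_{n}z_{n}\Vert_{\mathcal{X}}\to0$, so the substitution is harmless. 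With that modification your three-epsilon argument closes on the dense set $R(\lambda_{0})\mathcal{X}$ and extends to all of $\mathcal{X}$ by the uniform bound on $\Vert E_{n}T_{n}(t)P_{n}\Vert$.
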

In general, one establishes the first statement for a Trotter-Kato
approximation and then uses the second statement to approximate an
abstract evolution equation on a finite dimensional space. In their
paper, Ito and Kappel \citep{ito1998thetrotterkato} present the standard
ways to establish the first statement of the theorem (see also \citep{banks1989transformation,ackleh1997modeling,ackleh1997parameter}).
Therefore, here we assume that for a particular problem the first
statement in the theorem has already been established and thus the
evolution equation in (\ref{eq:abstract evolution equation}) can
be approximated by the following system of ODEs, 
\begin{equation}
u_{n}'(t)=\mathcal{F}_{n}\left(u_{n}(t)\right),\quad u_{n}(0)=P_{n}u(0,\,\bullet)\,.\label{eq:approximate IVP}
\end{equation}
Consequently, the solution of the IVP is mapped onto the infinite
dimensional Banach space $\mathcal{X}$ and one has the following
convergence 
\begin{equation}
\lim_{n\to\infty}\left\Vert E_{n}u_{n}(t)-u\right\Vert _{\mathcal{X}}=0\label{eq:convergence of IVP solutions}
\end{equation}
for $t$ in compact intervals. 

In general, finding explicit stationary solutions of abstract evolution
equations is a challenging task. Conversely, many efficient root finding
methods have been developed for finding steady states of a system
of ODEs. For large-scale nonlinear systems, many efficient methods
have been developed as well. Hence, we propose a numerical framework
that utilizes those efficient root finding methods to approximate
steady state solutions of general evolution equations. The idea is
to use an efficient and accurate root finding method to approximate
a stationary solution of the evolution equation (\ref{eq:abstract evolution equation})
with the stationary solutions of the IVP in (\ref{eq:approximate IVP}).
Thus, as a consequence of the Trotter-Kato Theorem, the steady states
of (\ref{eq:approximate IVP}) converge to the steady states of (\ref{eq:abstract evolution equation})
as $n\to\infty$.

\subsection{\label{sub:Numerical-convergence-results}Numerical convergence results}

To verify convergence of the proposed approximation scheme, we apply
the framework to the linear Sinko-Streifer model \citep{sinko1967anew}
for which an exact form of the stationary solution is available. The
model describes the dynamics of single species populations and takes
into account the physiological characteristics of animals of different
sizes (and/or ages) . The mathematical model reads as
\begin{equation}
u_{t}=\mathcal{G}(u)=-(gu)_{x}-\mu u,\quad t\ge0,\quad0\le x\le\overline{x}<\infty\label{eq:sinko-streifer}
\end{equation}
with a McKendrick-von Foerster type renewal boundary condition at
$x=0$
\[
g(0)u(t,\,0)=\int_{0}^{\overline{x}}q(y)u(t,\,y)\,dy
\]
and initial condition
\[
u(0,\,x)=u_{0}(x)\,.
\]
The variable $u(t,\,x)$ denotes the population density at time $t$
with size class $x$. The population is assumed to have a minimum
and a maximum size $0$ and $\overline{x}<\infty$, respectively.
The function $g(x)$ represents the average growth rate of the size
class $x$ and the coefficient $\mu(\bullet)$ represents a size-dependent
removal rate due to death or predation. The renewal function $q(\bullet)$
represents the number of new individuals entering the population due
to birth. 

Setting the right side of the equation (\ref{eq:sinko-streifer})
to zero and integrating over the size on $(0,\,x)$ yields the exact
stationary solution
\begin{equation}
u_{*}(x)=\frac{1}{g(x)}\exp\left(-\int_{0}^{x}\frac{\mu(s)}{g(s)}\,ds\right)\int_{0}^{\overline{x}}q(y)u_{*}(y)\,dy\,.\label{eq:exact stationary solution}
\end{equation}
Multiplying both sides of (\ref{eq:exact stationary solution}) by
$q(x)$ integrating over the size on $(0,\,\overline{x})$, we obtain
a necessary condition for existence of a stationary solution,
\begin{equation}
1=\int_{0}^{\overline{x}}\frac{q(x)}{g(x)}\exp\left(-\int_{0}^{x}\frac{\mu(s)}{g(s)}\,ds\right)\,dx\,.\label{eq:necessary condition}
\end{equation}
We also note that if $u_{*}$ is a stationary solution satisfying
(\ref{eq:exact stationary solution}), then any multiple of $u_{*}$
is also stationary solution of (\ref{eq:sinko-streifer}). The convergence
of the approximation scheme presented in Section \ref{sub:Approximation-scheme}
for Sinko-Streifer models has already been established in \citep{banks1989transformation}.
Using the basis functions for $n$-dimensional subspace $\mathcal{X}_{n}$
of the state space $\mathcal{X}=L^{1}(0,\,\overline{x})$ are defined
as
\[
\beta_{i}^{n}(x)=\left\{ \begin{array}{cc}
1;\, & x_{i-1}^{n}<x\le x_{i}^{n}\,;\,i=1,\ldots,n\\
0;\, & \mbox{otherwise}
\end{array}\right.
\]
for positive integer $n$ with $\{x_{i}^{n}\}_{i=0}^{n}$ a uniform
partition of $[0,\overline{x}]$, and $\Delta x=x_{j}^{n}-x_{j-1}^{n}$
for all $j$. The functions $\beta^{n}$ form an orthogonal basis
for the approximate solution space 
\[
\mathcal{X}_{n}=\left\{ h\in\mathcal{X}\;|\;h=\sum_{i=1}^{n}\alpha_{i}\beta_{i}^{n},\;\alpha_{i}\in\mathbb{R}\right\} ,
\]
and accordingly, we define the orthogonal projections $\pi_{n}:\,\mathcal{X}\to\mathcal{X}_{n}$
\[
\pi_{n}h(x)=\sum_{j=1}^{n}\,\alpha_{j}\beta_{j}^{n}(x),\qquad\text{where}\:\alpha_{j}=\frac{1}{\Delta x}\,\int_{x_{j-1}^{n}}^{x_{j}^{n}}\,h(x)\,dx.
\]
Moreover, since the evolution equation defined in (\ref{eq:sinko-streifer})
is a linear partial differential equation, the approximate operator
$\mathcal{G}_{n}$ on $\mathbb{R}^{n}$ is given by the following
$n\times n$ matrix{\footnotesize{}
\begin{equation}
\mathcal{G}_{n}=\begin{pmatrix}-\frac{1}{\Delta x}g(x_{1}^{n})-\mu(x_{1}^{n})+q(x_{1}^{n}) & q(x_{2}^{n}) & \cdots & q(x_{n-1}^{n}) & q(x_{n}^{n})\\
\frac{1}{\Delta x}g(x_{1}^{n}) & -\frac{1}{\Delta x}g(x_{2}^{n})-\mu(x_{2}^{n}) & 0 & \cdots & 0\\
0 & \frac{1}{\Delta x}g(x_{2}^{n}) & \ddots & \ddots & \vdots\\
\vdots & \ddots & \frac{1}{\Delta x}g(x_{n-2}^{n}) & -\frac{1}{\Delta x}g(x_{n-1}^{n})-\mu(x_{n-1}^{n}) & 0\\
0 & \cdots & 0 & \frac{1}{\Delta x}g(x_{n-1}^{n}) & -\frac{1}{\Delta x}g(x_{n}^{n})-\mu(x_{n}^{n})
\end{pmatrix}\,.\label{eq:approximate G}
\end{equation}
}At this point, one can use numerical techniques to calculate zeros
of the linear system
\begin{equation}
\mathcal{G}_{n}u_{n}=0\,.\label{eq:steady state equation}
\end{equation}
For the purpose of illustration, we set the model rates to 
\begin{equation}
q(x)=a(x+1),\quad g(x)=b(x+1),\quad\mu(x)=c\,.\label{eq:sinko region}
\end{equation}
For the purpose of illustration, we arbitrarily choose $a=1/\ln2$
and $b=c=1$ as these values satisfy the necessary condition (\ref{eq:necessary condition})
for the existence of the steady states of the Sinko-Streifer model.
Since the approximate operator $\mathcal{G}_{n}$ is an $n\times n$
matrix, we can compute the nullspace of $\mathcal{G}_{n}$ using standard
tools. The results of the numerical simulations are depicted in Figure
\ref{fig:Results-of-the}. Figure \ref{fig:convergence} illustrates
that as the dimension of the approximate space $\mathcal{X}_{n}$
increases the absolute error between the exact stationary solution
and the approximate stationary solution decreases. Moreover, the numerical
algorithm has a linear convergence rate. This is due to the fact that
we chose zeroth order functions as basis functions for approximate
subspaces. In general, if one desires a higher order convergence for
Galerkin-type approximations, choosing higher order basis functions
gives higher convergence rate \citep{kappel1981splineapproximations}.
Furthermore, Figure \ref{fig:comparison} indicates that even for
$n=100$ the fit between approximate and actual stationary solution
is satisfactory (the infinity norm of the error is $0.14$).

\begin{figure}
\centering{}\subfloat[\label{fig:convergence}]{\protect\includegraphics[width=0.33\columnwidth]{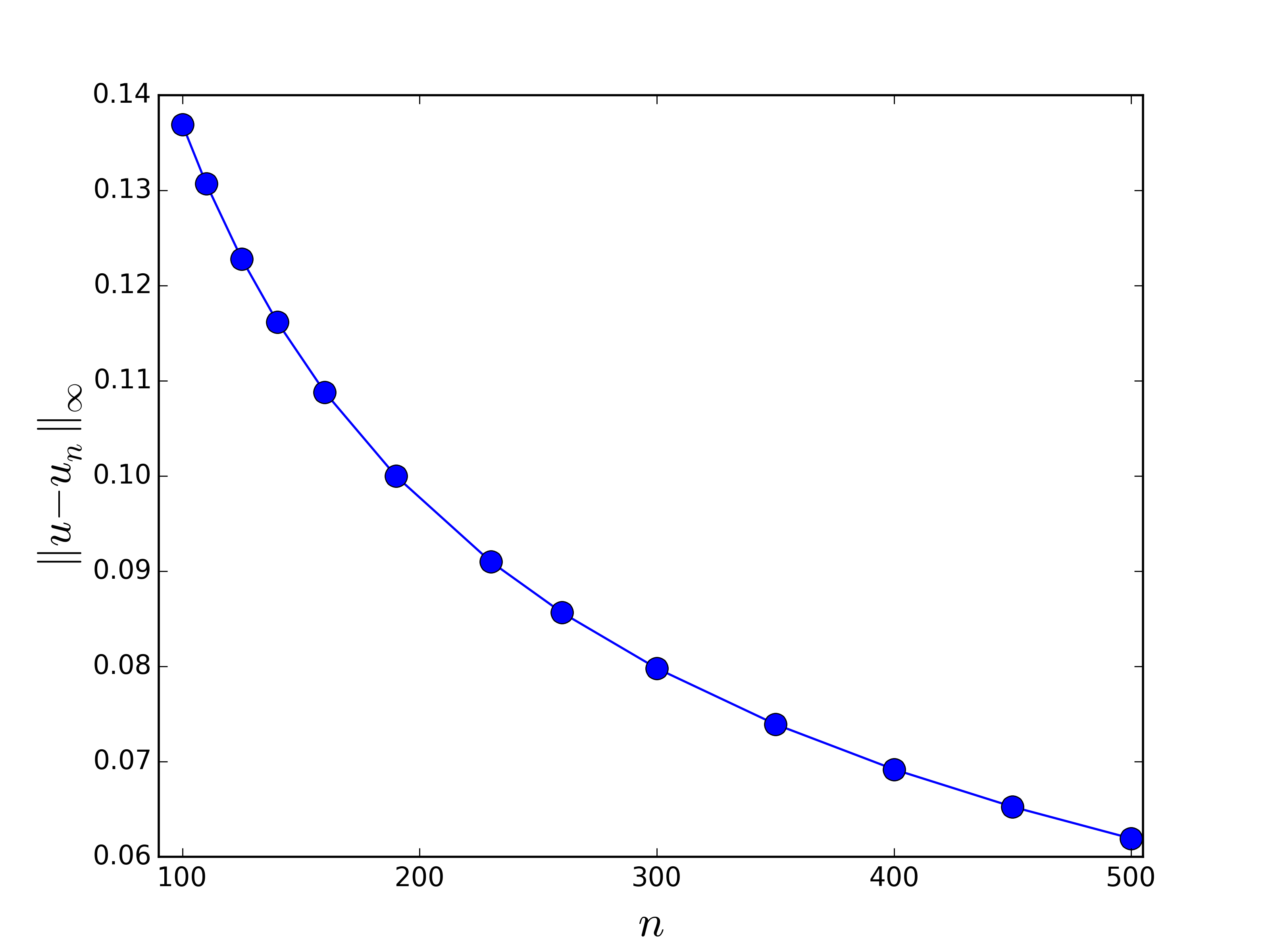}}~\subfloat[\label{fig:comparison}]{\protect\includegraphics[width=0.33\columnwidth]{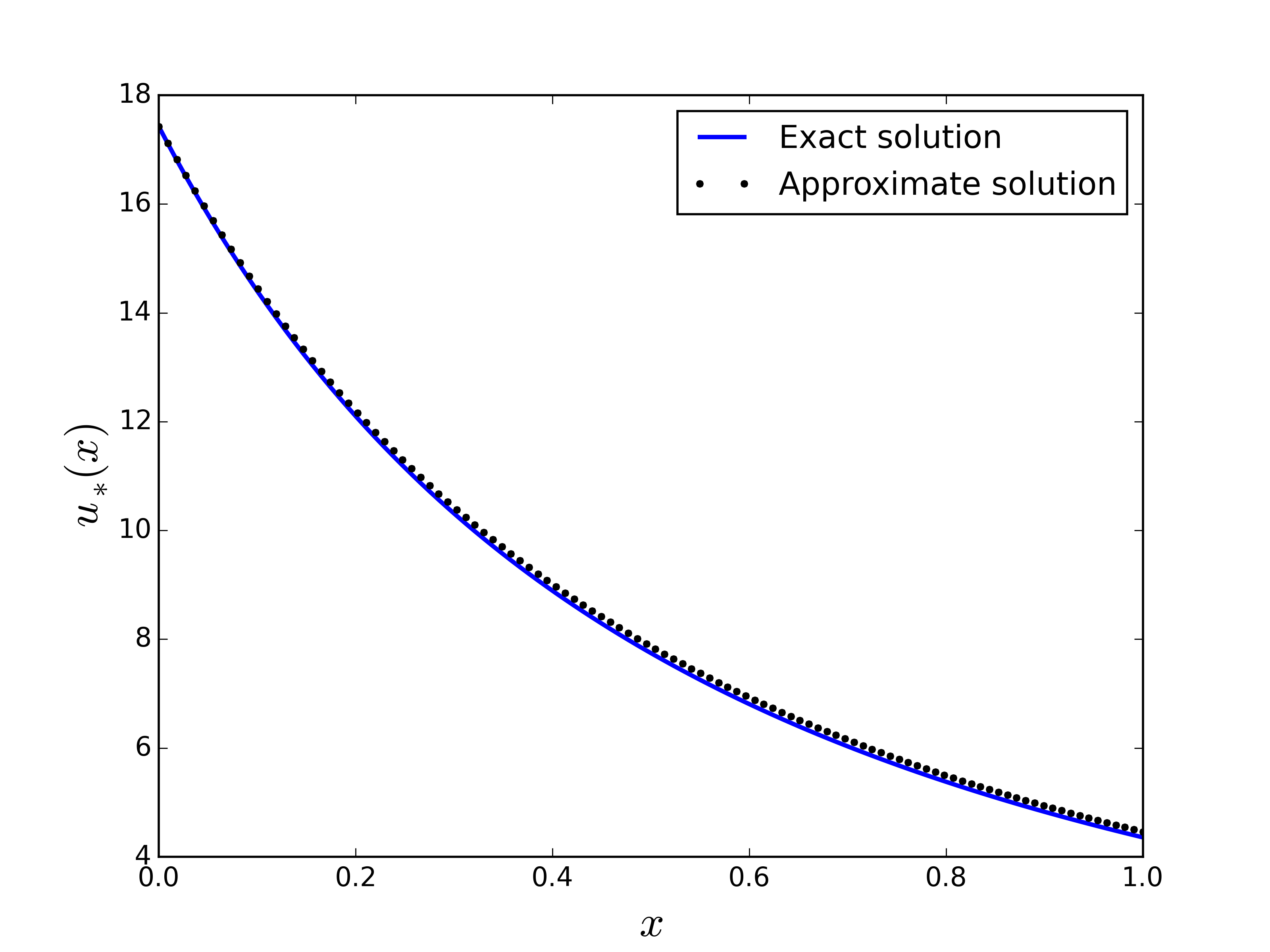}}~\subfloat[\label{fig:sinko-Existence-region}]{\protect\raggedright{}\protect\includegraphics[width=0.33\paperwidth]{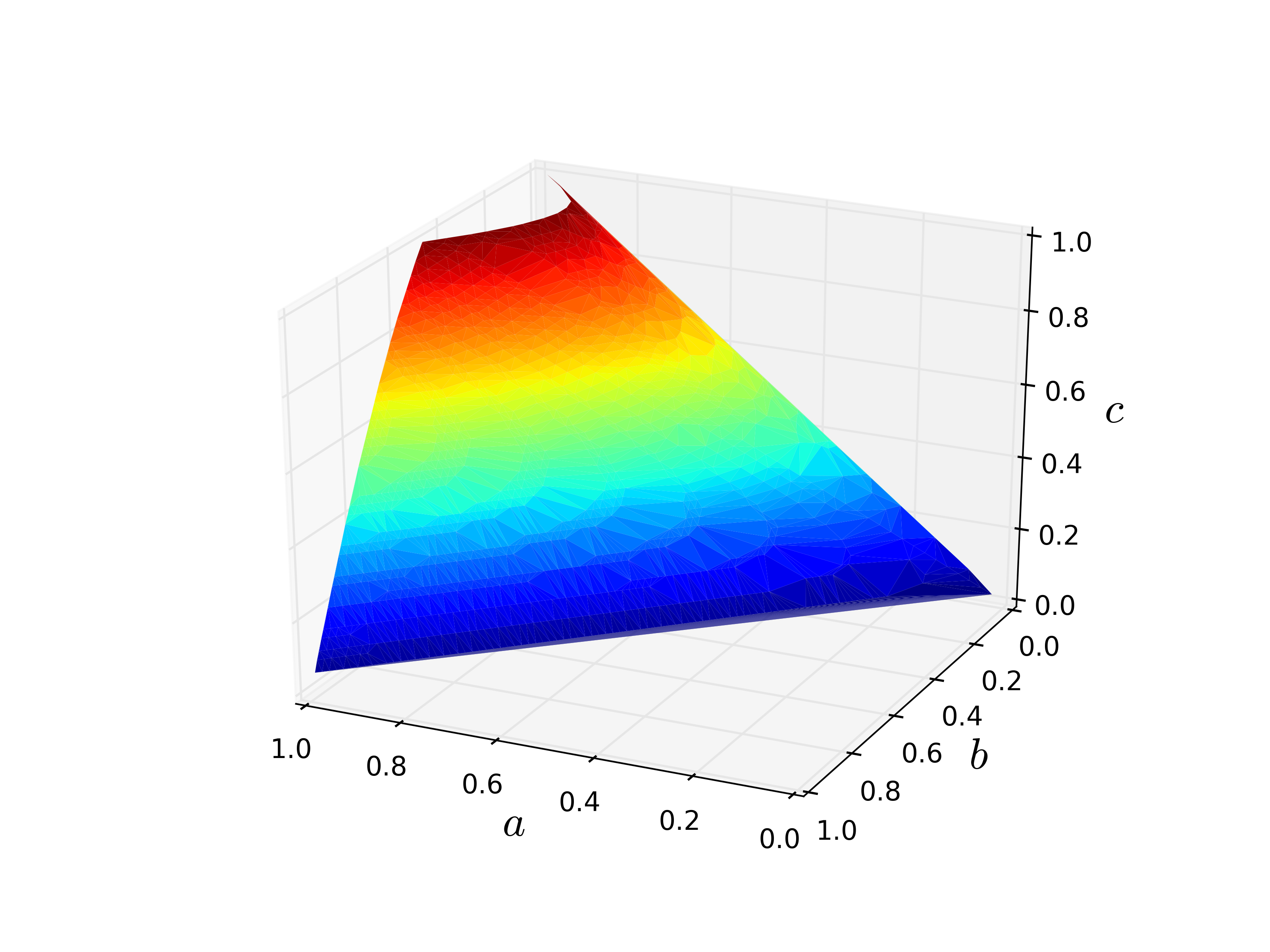}\protect}\protect\caption{\label{fig:Results-of-the}Results of the numerical simulations. a)
Absolute error between exact stationary solution and approximate stationary
solution decreases as the dimension of approximate subspaces $\mathcal{X}_{n}$
increase. b) Comparison of exact stationary solution with approximate
stationary solution for $n=100$. c) Existence and stability surface
for the steady states of the Sinko-Streifer model.}
\end{figure}

To further illustrate the utility of our approach, we used the numerical
scheme described in this section to generate existence and stability
regions for Sinko-Streifer model. Particularly, the interval $(a,b,\,c)\in[0,\,1]\times[0,\,1]\times[0,\,1]$
is discretized with $\Delta a=\Delta b=\Delta c=0.01$. Consequently,
we checked for the existence of a positive steady state at each of
these discrete points, the resulting existence region is depicted
in Figure \ref{fig:sinko-Existence-region} forming a nontrivial three-dimensional
surface. Moreover, the existence and stability regions of the Sinko-Streifer
model coincide for the chosen model rates in (\ref{eq:sinko region}).

\section{\label{sec:Application-to-nonlinear}Application to nonlinear population
balance equation}

In aerosol physics and environmental sciences, studying the \emph{flocculation}
of particles is widespread. The process of flocculation involves disperse
particles in suspension combining into aggregates (i.e., a floc) and
separating. The mathematical model used to study flocculation process
is the well-known population balance equation (PBE) which describes
the time-evolution of the particle size number density. The equations
for the flocculation model track the time-evolution of the particle
size number density $u(t,\,x)$ and can be written as 
\begin{eqnarray}
\partial_{t}u & = & \mathcal{F}(u)\label{eq: agg and growth model}
\end{eqnarray}
where
\[
\mathcal{F}(u):=\mathcal{G}(u)+\mathcal{A}(u)+\mathcal{B}(u),
\]
$\mathcal{G}$ denotes growth
\begin{equation}
\mathcal{G}(u):=-\partial_{x}(gu)-\mu(x)u(t,\,x)\,,\label{eq:Growth}
\end{equation}
$\mathcal{A}$ denotes aggregation
\begin{alignat}{1}
\mathcal{A}(u) & :=\frac{1}{2}\int_{0}^{x}k_{a}(x-y,\,y)u(t,\,x-y)u(t,\,y)\,dy\nonumber \\
 & \quad-u(t,\,x)\int_{0}^{\overline{x}-x}k_{a}(x,\,y)u(t,\,y)\,dy\,,\label{eq:Aggregation}
\end{alignat}
and $\mathcal{B}$ denotes breakage 
\begin{equation}
\mathcal{B}(u):=\int_{x}^{\overline{x}}\Gamma(x;\,y)k_{f}(y)u(t,\,y)\,dy-\frac{1}{2}k_{f}(x)u(t,\,x)\,.\label{eq:Breakage}
\end{equation}
The boundary condition is traditionally defined at the smallest size
$0$ and the initial condition is defined at $t=0$

\[
g(0)u(t,\,0)=\int_{0}^{\overline{x}}q(x)u(t,\,x)dx,\quad u(0,\,x)=u_{0}(x)\in L^{1}(0,\,\overline{x})\,,
\]
where the renewal rate $q(x)$ represents the number of new flocs
entering the population. A floc is assumed to have a maximum size
$\overline{x}<\infty$. The function $g(x)$ represents the average
growth rate of the flocs of size $x$ due to proliferation, and the
coefficient $\mu(x)$ represents a size-dependent removal rate due
to gravitational sedimentation and death. The function $k_{a}(x,\,y)$
is the aggregation kernel, which describes the rate with which the
flocs of size $x$ and $y$ agglomerate to form a floc of size $x+y$.
The fragmentation kernel $k_{f}(x)$ calculates the rate with which
a floc of size $x$ fragments. The integrable function $\Gamma(x;y)$
represents the post-fragmentation probability density of daughter
flocs for the fragmentation of the parent flocs of size $y$. In other
words, all the fractions of daughter flocs formed upon the fragmentation
of a parent floc sum to unity,
\begin{equation}
\int_{0}^{y}\Gamma(x;\,y)\,dx=1\text{ for all }y\in(0,\,\overline{x}].\label{eq:number conservation}
\end{equation}

The population balance equation, presented in (\ref{eq: agg and growth model}),
is a generalization of many mathematical models appearing in the size-structured
population modeling literature and has been widely used, e.g., to
model the formation of clouds and smog in meteorology \citep{pruppacher2012microphysics},
the kinetics of polymerization in biochemistry \citep{ziff1980kinetics},
the clustering of planets, stars and galaxies in astrophysics \citep{makino1998onthe},
and even schooling of fish in marine sciences \citep{niwa1998schoolsize}.
For example, when the fragmentation kernel is omitted, $k_{f}\equiv0$,
the flocculation model reduces to algal aggregation model used to
describe evolution of a phytoplankton community \citep{ackleh1997modeling}.
When the removal and renewal rates are set to zero, the flocculation
model simplifies to a model used to describe the proliferation of
\emph{Klebsiella pneumoniae }in a bloodstream \citep{bortz2008klebsiella}.
Furthermore, the flocculation model, with only growth and fragmentation
terms, was used to investigate the elongation of prion polymers in
infected cells \citep{calvez2012selfsimilarity,doumic-jauffret2009eigenelements,calvez2010priondynamics}. 

The equation (\ref{eq: agg and growth model}) has also been the focus
of considerable mathematical analysis. Well-posedness of the general
flocculation model was first established by Ackleh and Fitzpatrick
\citep{ackleh1997parameter,ackleh1997modeling} in an $L^{2}$-space
setting and later by Banasiak and Lamb \citep{banasiak2009coagulation}
in an $L^{1}$-space setting. Moreover, asymptotic behavior of the
equation (\ref{eq: agg and growth model}) has been a challenging
task because of the nonlinearity introduced by the aggregation terms.
Nevertheless, under suitable conditions on the kernels, the existence
of a positive steady state has been established for the pure aggregation
and fragmentation case \citep{laurencot2005steadystates}. For a review
of further mathematical results, we refer readers to review articles
by Menon and Pego \citep{menon2006dynamical}, and Wattis \citep{wattis2006anintroduction}
and the book by Ramkrishna \citep{ramkrishna2000population}. Lastly,
although the population balance equation has received substantial
theoretical work, the derivation of analytical solutions for many
realistic aggregation kernels has proven elusive. Towards this end,
many discretization schemes for numerical simulations of the PBEs
have been proposed. For instance, to approximate steady state solutions
of PBEs, numerical schemes based on the least squares spectral method
\citep{dorao2006application,dorao2006aleast,dorao2007leastsquares}
and the finite element method \citep{nicmanis1998finiteelement,nicmanis2002errorestimation,hounslow1990adiscretized}
have been developed. For the further review of approximation methods
we refer interested readers to the review by Bortz \citep{bortz2015chapter}.

\subsection{\label{sub:Numerical-implementation-and}Numerical implementation
and results}

For the numerical implementation we adopt the scheme developed in
Section \ref{sub:Approximation-scheme}. Therefore, the approximate
formulation of (\ref{eq: agg and growth model}) becomes the following
system of $n$ nonlinear ODEs for $u_{n}=(\alpha_{1},\cdots,\alpha_{n})^{T}\in\mathbb{R}^{n}$
:

\begin{alignat}{1}
\dot{u}_{n} & =\mathcal{F}_{n}(u_{n})=\mathcal{G}_{n}u_{n}+P_{n}\mathcal{A}(E_{n}u_{n})+P_{n}\mathcal{B}(E_{n}u_{n}),\label{eq:numerical approximation}\\
u_{n}(0,x) & =P_{n}u_{0}(x)\,,\label{eq:initial condition projection}
\end{alignat}
where the matrix $\mathcal{G}_{n}$ is defined as in Section \ref{sub:Numerical-convergence-results},
\[
P_{n}\mathcal{A}(E_{n}u_{n})=\begin{pmatrix}-\alpha_{1}\sum_{j=1}^{n-1}k_{a}(x_{1}^{n},\,x_{j}^{n})\alpha_{j}\Delta x\\
\frac{1}{2}k_{a}(x_{1}^{n},\,x_{1}^{n})\alpha_{1}\alpha_{1}\Delta x-\alpha_{2}\sum_{j=1}^{n-2}k_{a}(x_{2}^{n},\,x_{j}^{n})\alpha_{j}\Delta x\\
\vdots\\
\frac{1}{2}\sum_{j=1}^{n-2}k_{a}(x_{j}^{n},\,x_{n-1-j}^{n})\alpha_{j}\alpha_{n-1-j}\Delta x-\alpha_{n-1}k_{a}(x_{n-1}^{n},\,x_{1}^{n})\alpha_{1}\Delta x\\
\frac{1}{2}\sum_{j=1}^{n-1}k_{a}(x_{j}^{n},\,x_{n-j}^{n})\alpha_{j}\alpha_{n-j}\Delta x
\end{pmatrix}
\]
and 
\[
P_{n}\mathcal{B}(E_{n}u_{n})=\begin{pmatrix}\sum_{j=2}^{n}\Gamma(x_{1}^{n};\,x_{j}^{n})k_{f}(x_{j}^{n})\alpha_{j}\Delta x-\frac{1}{2}k_{f}(x_{1}^{n})\alpha_{1}\\
\sum_{j=3}^{n}\Gamma(x_{2}^{n};\,x_{j}^{n})k_{f}(x_{j}^{n})\alpha_{j}\Delta x-\frac{1}{2}k_{f}(x_{2}^{n})\alpha_{2}\\
\vdots\\
\Gamma(x_{n-1}^{n};\,x_{n}^{n})k_{f}(x_{n}^{n})\alpha_{n}\Delta x-\frac{1}{2}k_{f}(x_{n-1}^{n})\alpha_{n-1}\\
-\frac{1}{2}k_{f}(x_{n}^{n})\alpha_{n}
\end{pmatrix}\,.
\]
The convergence of the approximate scheme (\ref{eq:numerical approximation})-(\ref{eq:initial condition projection})
has been established in \citep{ackleh1997parameter}. Therefore, the
stationary solutions of the microbial flocculation model (\ref{eq: agg and growth model})
can be systematically approximated by the stationary solutions of
the system of nonlinear ODEs given in (\ref{eq:numerical approximation}).
We used Powell's hybrid root finding method \citep{powell1970ahybrid}
as implemented in Python 2.7.10\texttt{ }\footnote{\texttt{scipy.optimize.fsolve}}
to find zeros of the steady state equation \citep{mirzaev2015steadystate}.
For faster convergence rate, we provided the solver with the exact
Jacobian of $\mathcal{F}_{n}(u_{n})$ ( see Section \ref{sec:Establishing-local-stability},
Eqn (\ref{eq:jacobian of F}) for the formulation of the Jacobian.
For the purpose of illustration, for a post-fragmentation density
function we chose the well-known Beta distribution\footnote{ Although normal and log-normal distributions are mostly used in the
literature, Byrne et al. \citep{byrne2011postfragmentation} have
provided evidence that the Beta density function describes the fragmentation
of small bacterial flocs. } with $\alpha=\beta=2$,
\[
\Gamma(x,\,y)=\mathbbm{1}_{[0,\,y]}(x)\frac{6x(y-x)}{y^{3}}\,,
\]
where $\mathbbm{1}_{I}$ is the indicator function on the interval
$I$. The aggregation kernel was chosen to describe flow within laminar
shear field (i.e., \emph{orthokinetic} aggregation)
\[
k_{a}(x,\,y)=\left(x^{1/3}+y^{1/3}\right)^{3}
\]
Other model rates were chosen arbitrarily as 
\[
q(x)=a(x+1),\quad g(x)=b(x+1)\quad\mu(x)=cx\quad k_{f}(x)=x\,,
\]
where $a,\,b$ and $c$ are some positive real numbers. 

The main advantage of this approximation scheme (\ref{eq:numerical approximation})-(\ref{eq:initial condition projection})
is that it can be initialized very fast using Toeplitz matrices \citep{matveev2015afast}.
Fast initialization of the discretization scheme allows one to check
the existence of the steady states at many discrete points efficiently.
This in turn allows for the generation of the existence and stability
regions of the steady states of the PBE in (\ref{eq: agg and growth model}).
To illustrate the existence regions of the steady states of the PBE,
we discretized the intervals $a\in[0,\,15]\,,b\in[0,\,1]$ and $c\in[0,\,5]$
with $\Delta a=\Delta b=\Delta c=0.1$. We note that for faster convergence
the root finding method needs an initial seed close to the steady
state solution. Since we have no information about the existing steady
state, we seed the root finding method with $10$ different uniform
initial guesses i.e., 
\[
\left\{ u_{0}(x)=2^{i}\,|\,i=0,1,\dots,9\right\} \,,
\]
before we conclude a positive steady state does not exist for a given
point $(a,\,b,\,c)$. Consequently, we checked for the existence of
a positive steady state at each of these discrete points. As depicted
in Figure \ref{fig:pbe_region}, the existence region of positive
steady states of the PBE forms a three dimensional wedge like region.
Moreover, in Figures \ref{fig:b_0.1}-\ref{fig:b_1.0}, to deduce
stability of each steady state solution, we checked the spectrum of
the Jacobian matrix evaluated at each steady state. Particularly,
if the real part of rightmost eigenvalue of the Jacobian matrix is
negative, the steady state is identified as locally stable (blue region).
Conversely, if the real part of rightmost eigenvalue of the Jacobian
matrix is positive the steady state is identified as unstable (red
region). One can observe that growth ($b$) and removal ($c$) rates
can balance the smaller renewal rates ($a$), and thus locally stable
steady states exist. However, as the renewal rate gets larger steady
states first become unstable and then cease to exist (yellow region).
This is also illustrated in Figure \ref{fig:effect_of_renewal}, where
steady states start diverging for the larger renewal rates ($a$).

\begin{figure}
\centering{}\subfloat[\label{fig:pbe_region}]{\protect\centering{}\protect\includegraphics[width=0.4\paperwidth]{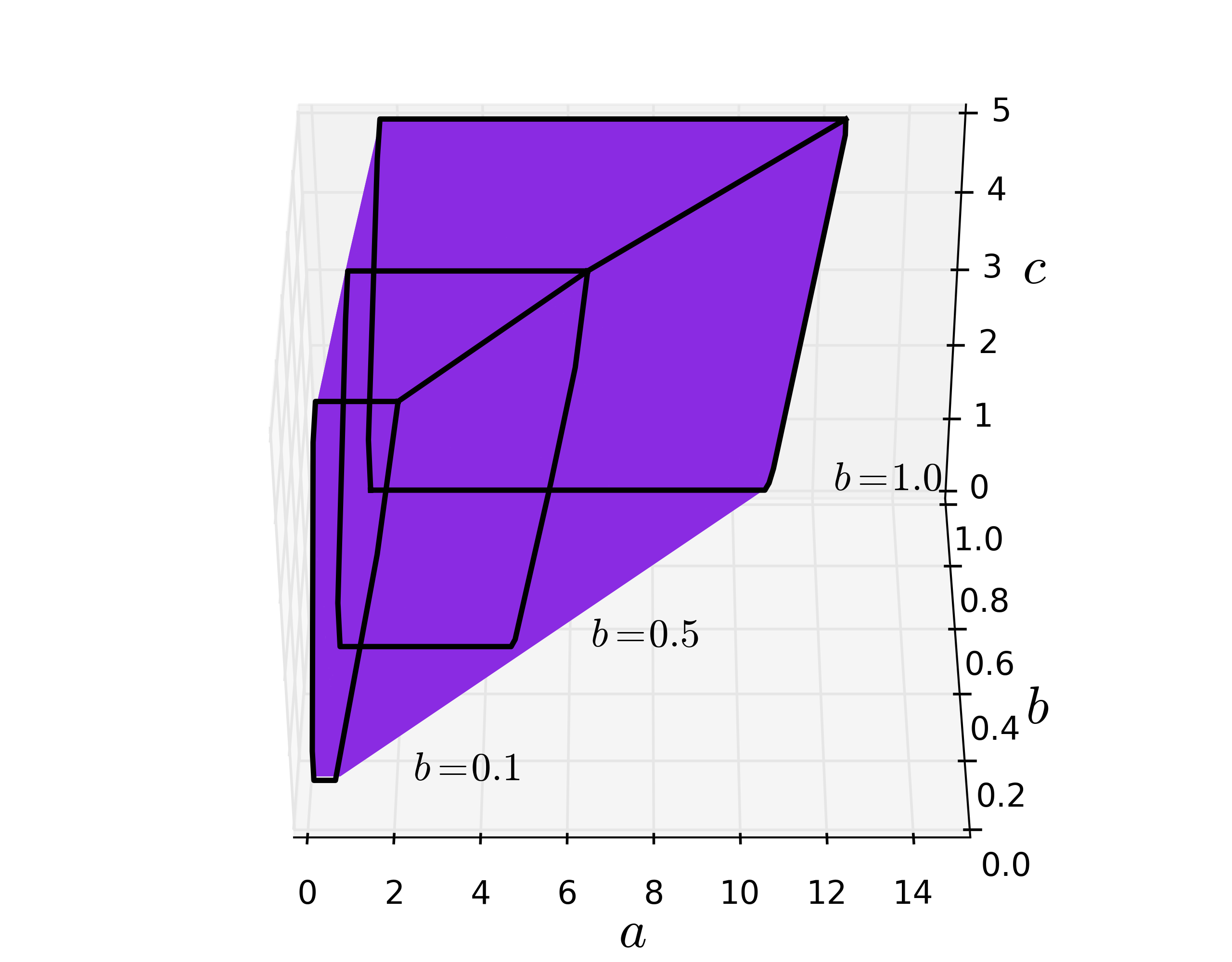}\protect}~\subfloat[\label{fig:b_0.1}]{\protect\centering{}\protect\includegraphics[width=0.35\paperwidth]{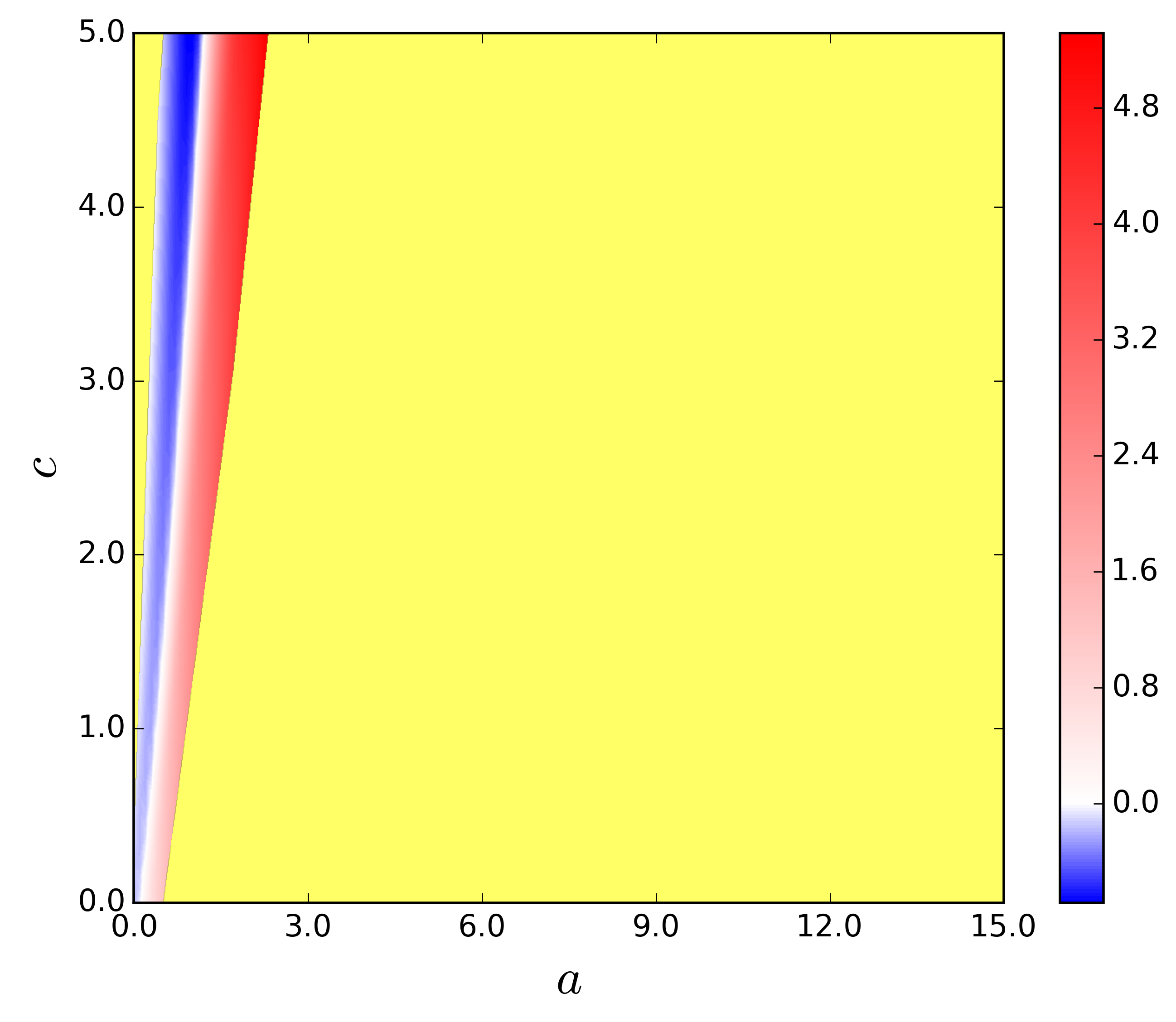}\protect}\\
\subfloat[]{\protect\centering{}\protect\includegraphics[width=0.35\paperwidth]{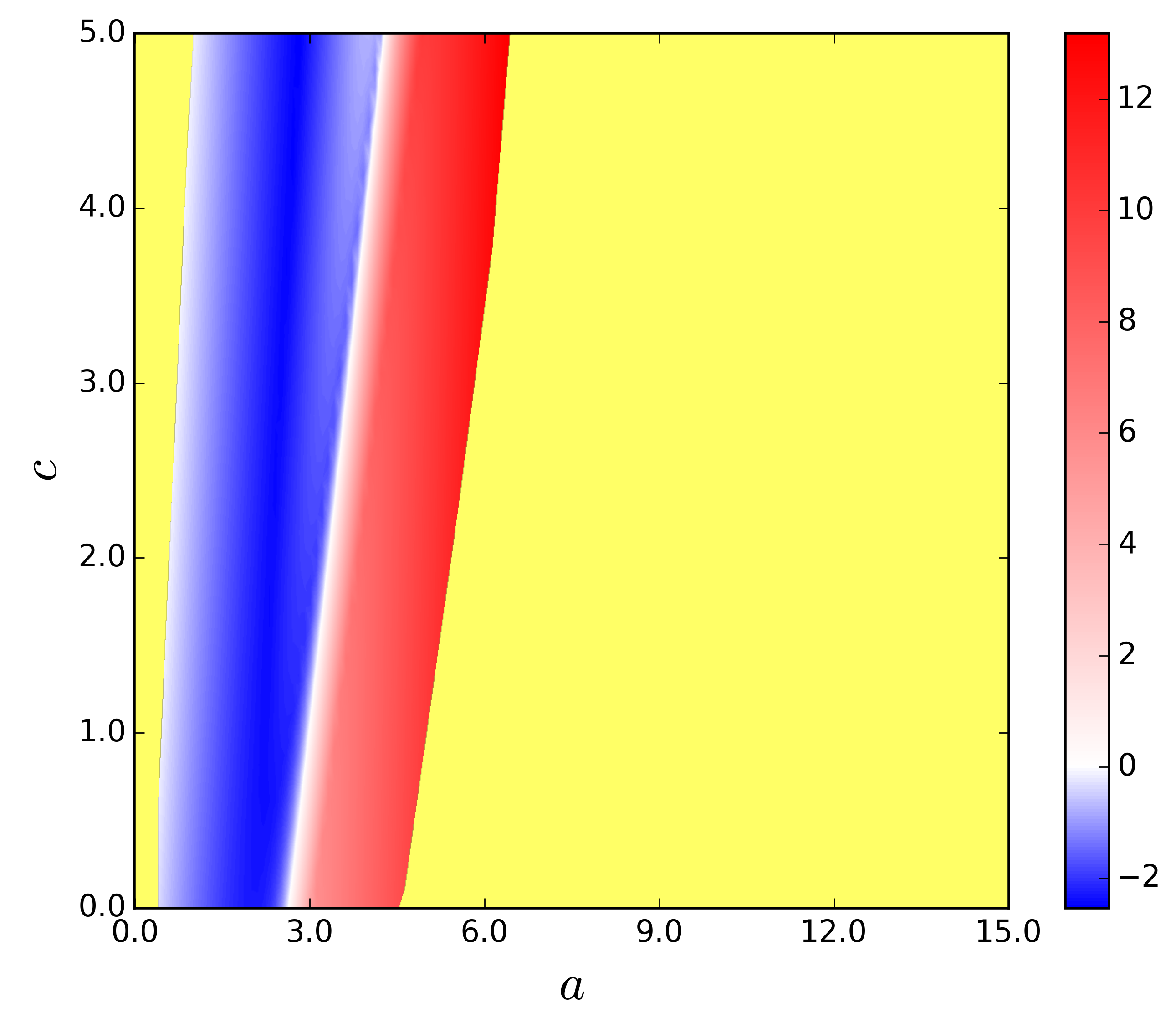}\protect}\hspace*{1.4cm}\subfloat[\label{fig:b_1.0}]{\protect\centering{}\protect\includegraphics[width=0.35\paperwidth]{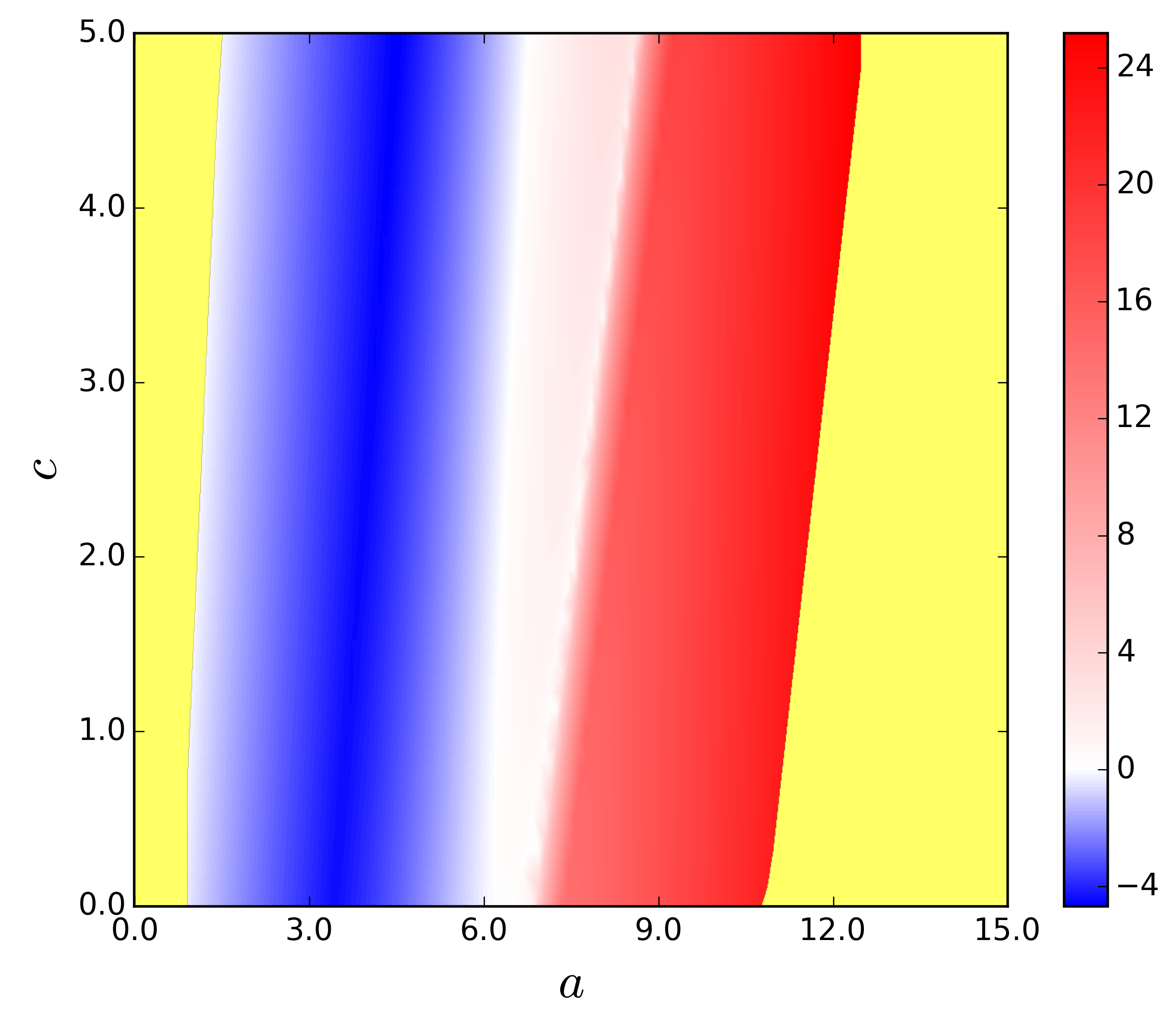}\protect}\protect\caption{Existence and stability regions for the steady states of the PBE a)
Existence region for the steady states of the PBE forms a wedge like
shape. b) Stability region for $b=0.1$, $a\in[0,\,15]$ and $c\in[0,\,5]$.
c) Stability region for $b=0.5$, $a\in[0,\,15]$ and $c\in[0,\,5]$.
d) Stability region for $b=1.0$, $a\in[0,\,15]$ and $c\in[0,\,5]$.
Color bar represents the real part of rightmost eigenvalue of the
Jacobian matrix evaluated at each steady state. Yellow regions represents
the region for which a positive steady state does not exists. }
\end{figure}

\begin{figure}
\centering{}\subfloat[\label{fig:Steady-state-solution}]{\protect\centering{}\protect\includegraphics[scale=0.4]{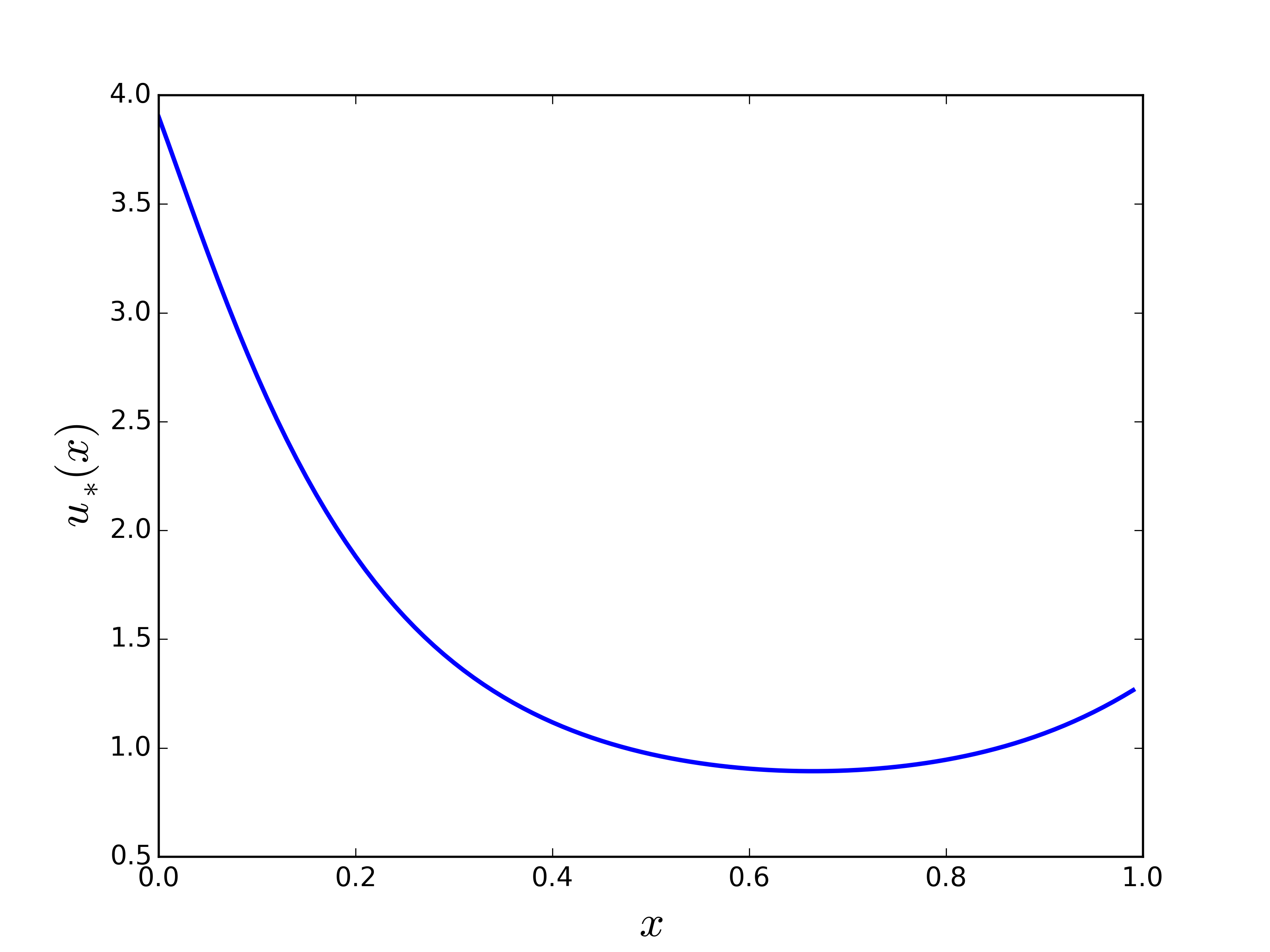}\protect}~\subfloat[\label{fig:effect_of_renewal}]{\protect\centering{}\protect\includegraphics[scale=0.4]{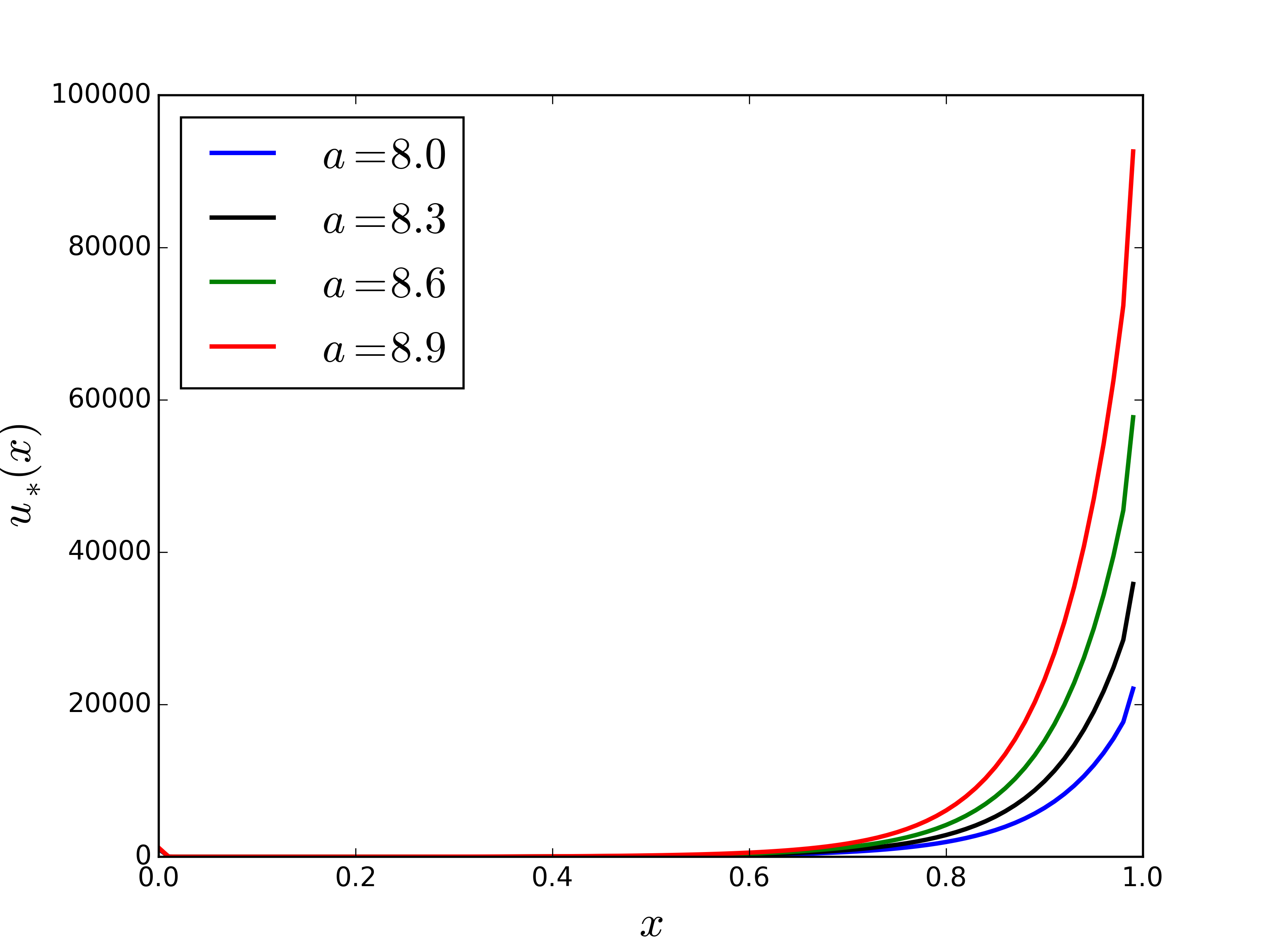}\protect}\protect\caption{a) An example steady-state solution of the PBE for $b=0.5,\,a=c=1$.
b) Steady states for increasing renewal rate and $b=c=1$}
\end{figure}

Figure \ref{fig:Steady-state-solution} illustrates an example stationary
solution for $b=0.5,\,a=c=1$. To confirm that the function depicted
in Figure \ref{fig:Time-evolution-of} is indeed a locally stable
steady state, we simulated the system of ODEs in (\ref{eq:numerical approximation})-(\ref{eq:initial condition projection})
for $t\in[0,\,10]$ with a collection of arbitrary initial conditions
(Figure \ref{fig:Time-evolution-of}a) close to the steady state solution.
One can observe in Figure \ref{fig:Time-evolution-of} that the stationary
solution is indeed locally stable and thus initial conditions, Figure
\ref{fig:Time-evolution-of}, converge to the steady state depicted
in Figure \ref{fig:Time-evolution-of}b. As depicted in Figures \ref{fig:Time-evolution-of}c
and \ref{fig:Time-evolution-of}d, convergence is also reflected in
the evolution of the total number of flocs (zeroth moment),
\[
M_{0}(t)=\int_{0}^{\overline{x}}u(t,\,x)\,dx\approx\sum_{i=1}^{n}\int_{x_{i-1}^{n}}^{x_{i}^{n}}\alpha_{i}\beta_{i}^{n}(x)\,dx=\Delta x\sum_{i=1}^{n}\alpha_{i}\,,
\]
and total mass of the flocs (first moment),
\[
M_{1}(t)=\int_{0}^{\overline{x}}xu(t,\,x)\,dx\approx\sum_{i=1}^{n}\int_{x_{i-1}^{n}}^{x_{i}^{n}}\alpha_{i}x\beta_{i}^{n}(x)\,dx=\frac{\Delta x}{2}\sum_{i=1}^{n}\alpha_{i}\left(x_{i}^{n}+x_{i-1}^{n}\right)\,.
\]

\begin{figure}
\centering{}\subfloat[]{\protect\includegraphics[scale=0.4]{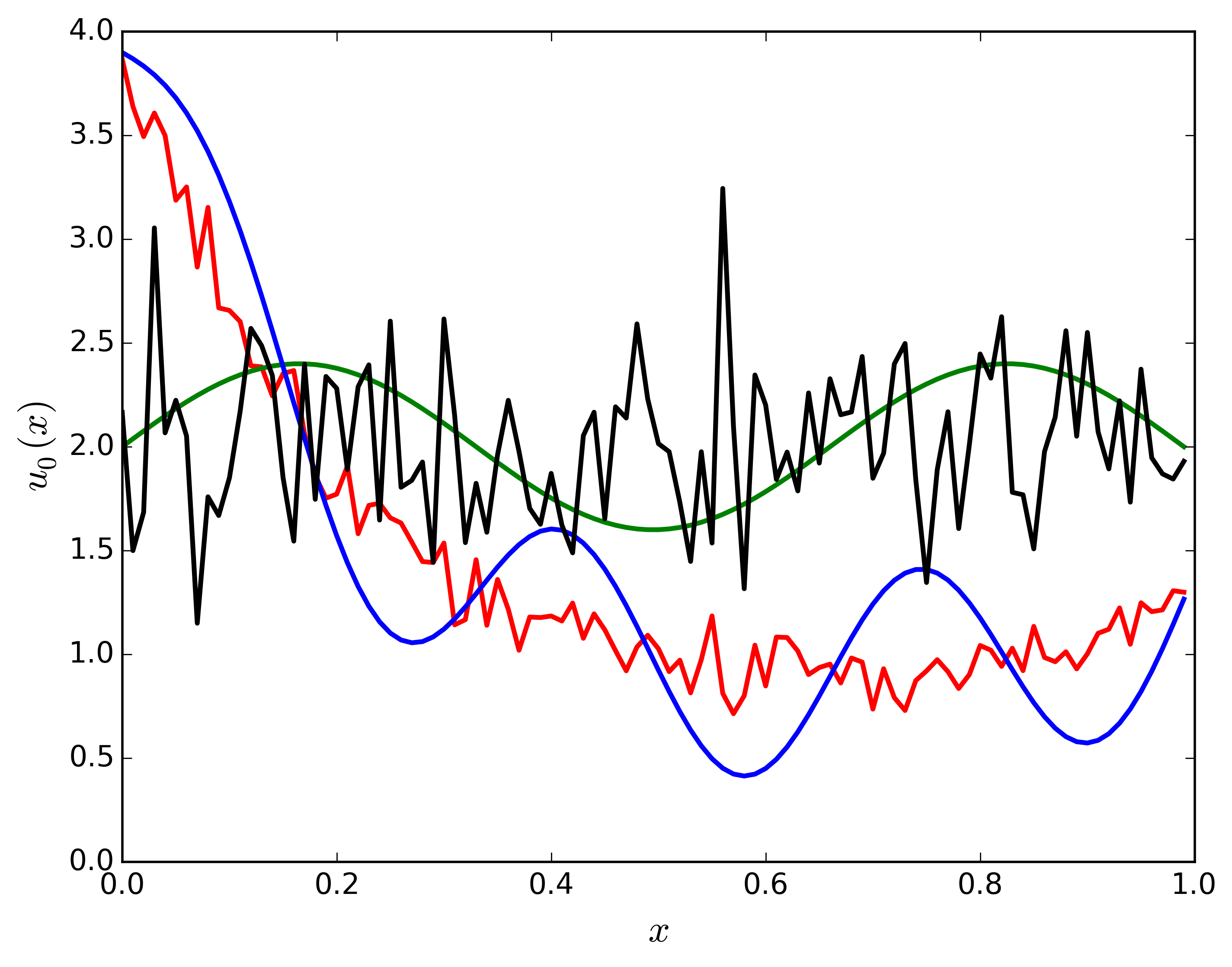}

}~\subfloat[]{\protect\includegraphics[scale=0.4]{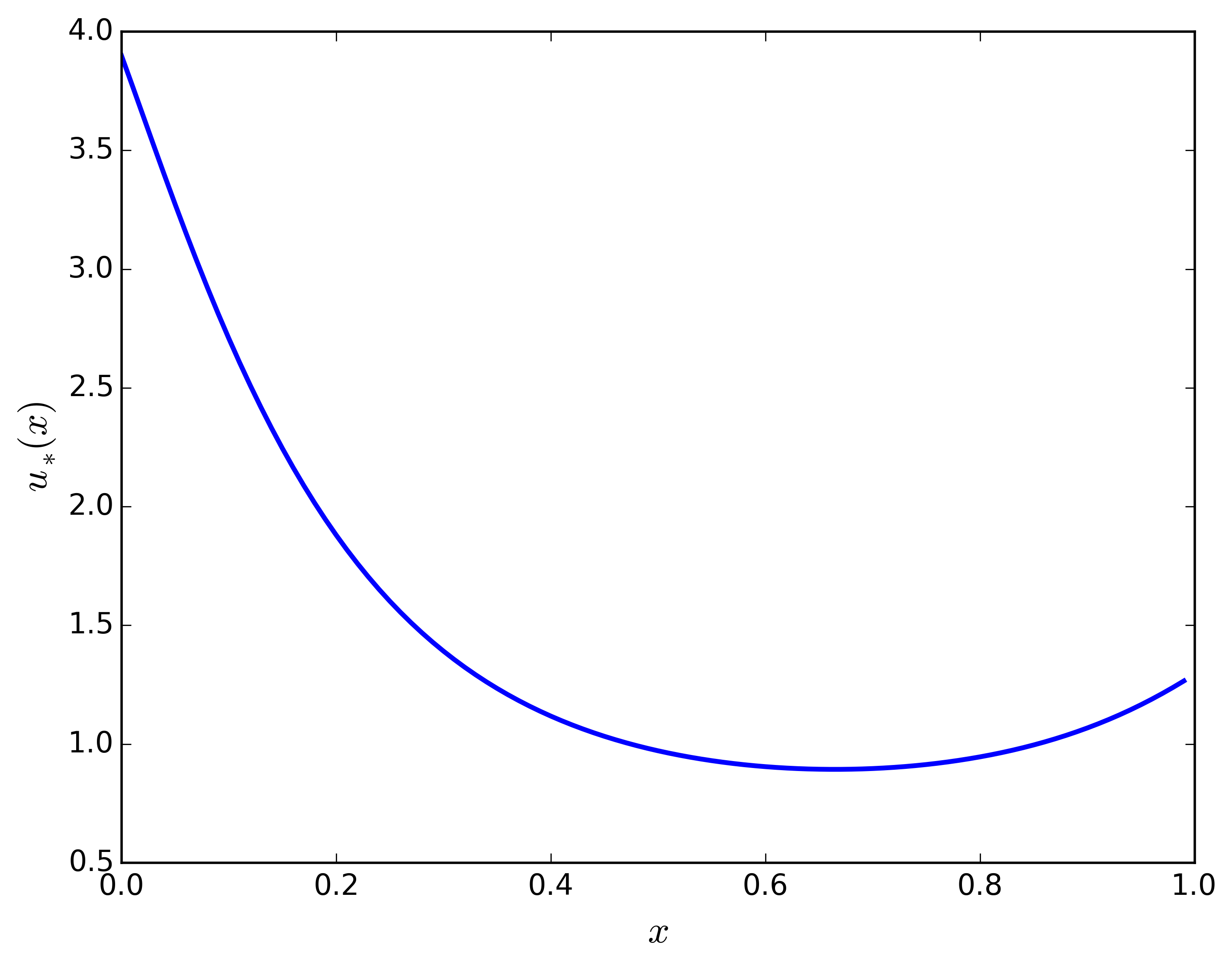}

}\\
\subfloat[]{\protect\includegraphics[scale=0.4]{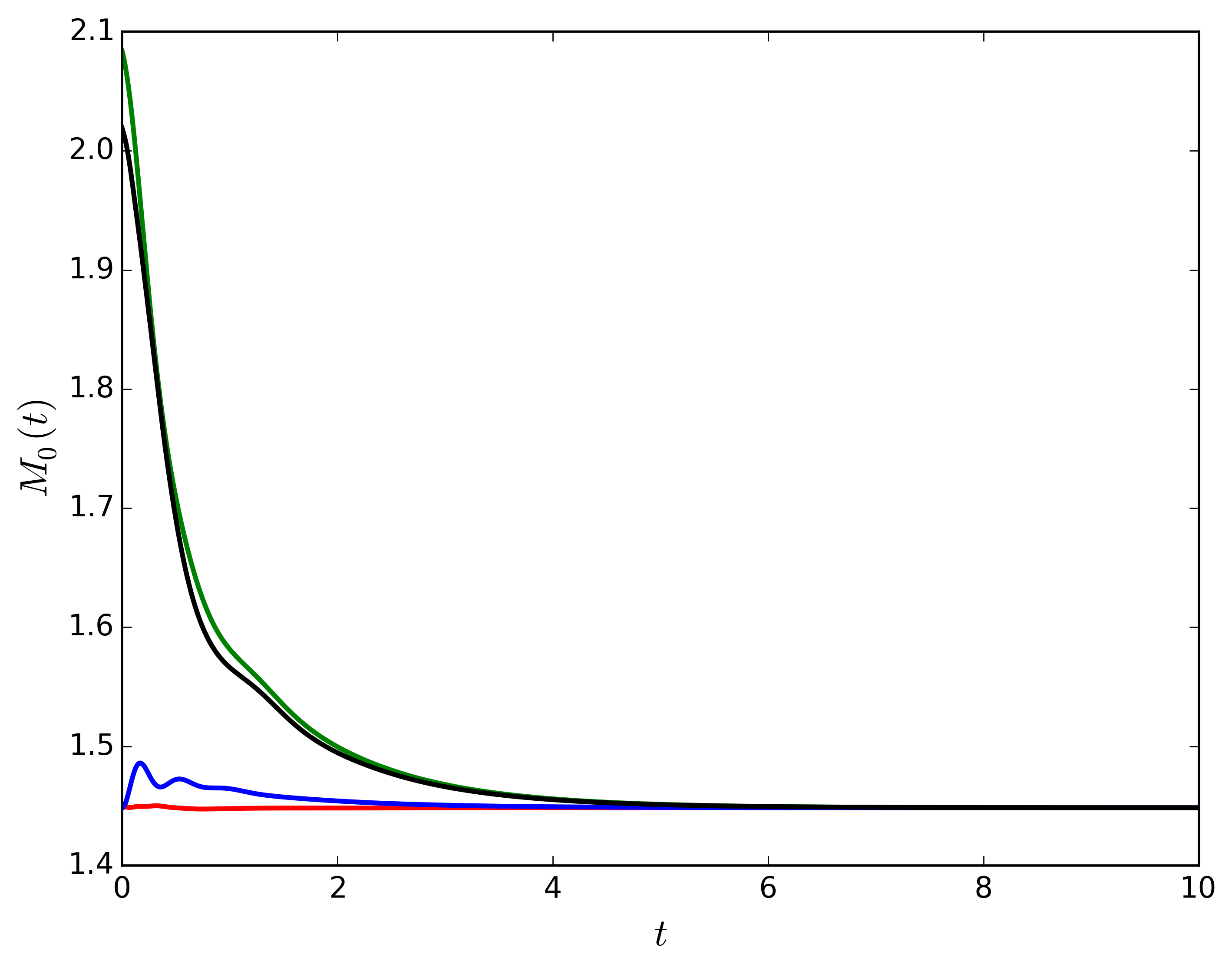}

}~\subfloat[]{\protect\includegraphics[scale=0.4]{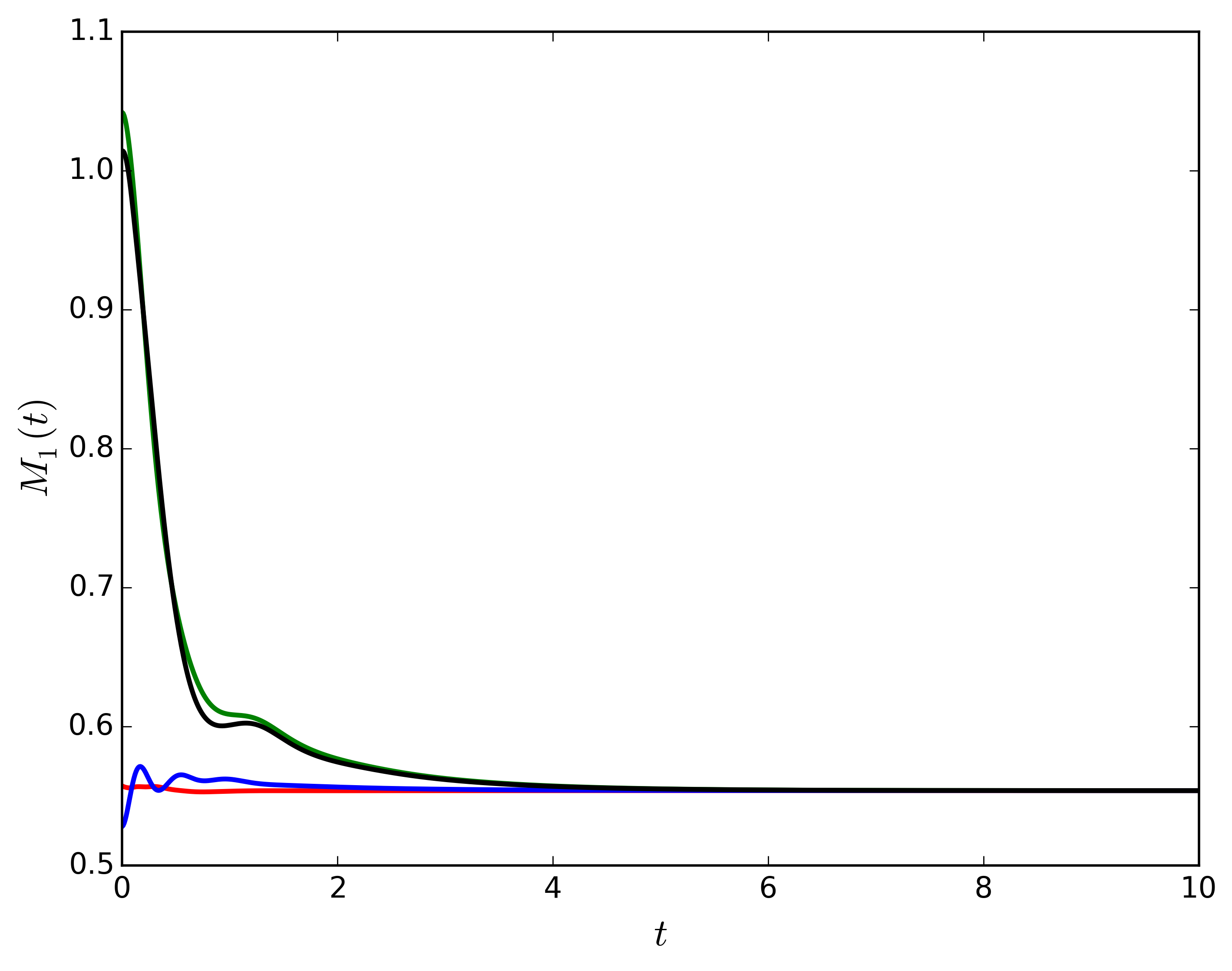}

}\protect\caption{\label{fig:Time-evolution-of}Time evolution of the flocculation model
with arbitrary initial conditions. a) Four different initial conditions
are chosen close to the steady state. b) Solution of the PBE for those
initial conditions at $t=10$. c) Evolution of the total number $M_{0}(t)$
of the flocs for $t\in[0,\,10]$. d) Evolution of the total mass $M_{1}(t)$
of the flocs for $t\in[0,\,10]$.}
\end{figure}
Moreover, to confirm that the steady state solution is not changing
with increasing dimension of approximate spaces $\mathcal{X}_{n}$,
we simulated our numerical scheme for different values of $n$. Figure
\ref{fig:Change-in-zeroth}a illustrates that stationary solutions
converge to exact stationary solution of (\ref{eq: agg and growth model})
as $n\to\infty$. Furthermore, one can observe, in Figure (\ref{fig:Change-in-zeroth})b
that difference between approximate steady states for different values
of $n$ is considerably small.

\begin{figure}
\centering{}\subfloat[]{\protect\centering{}\protect\includegraphics[scale=0.5]{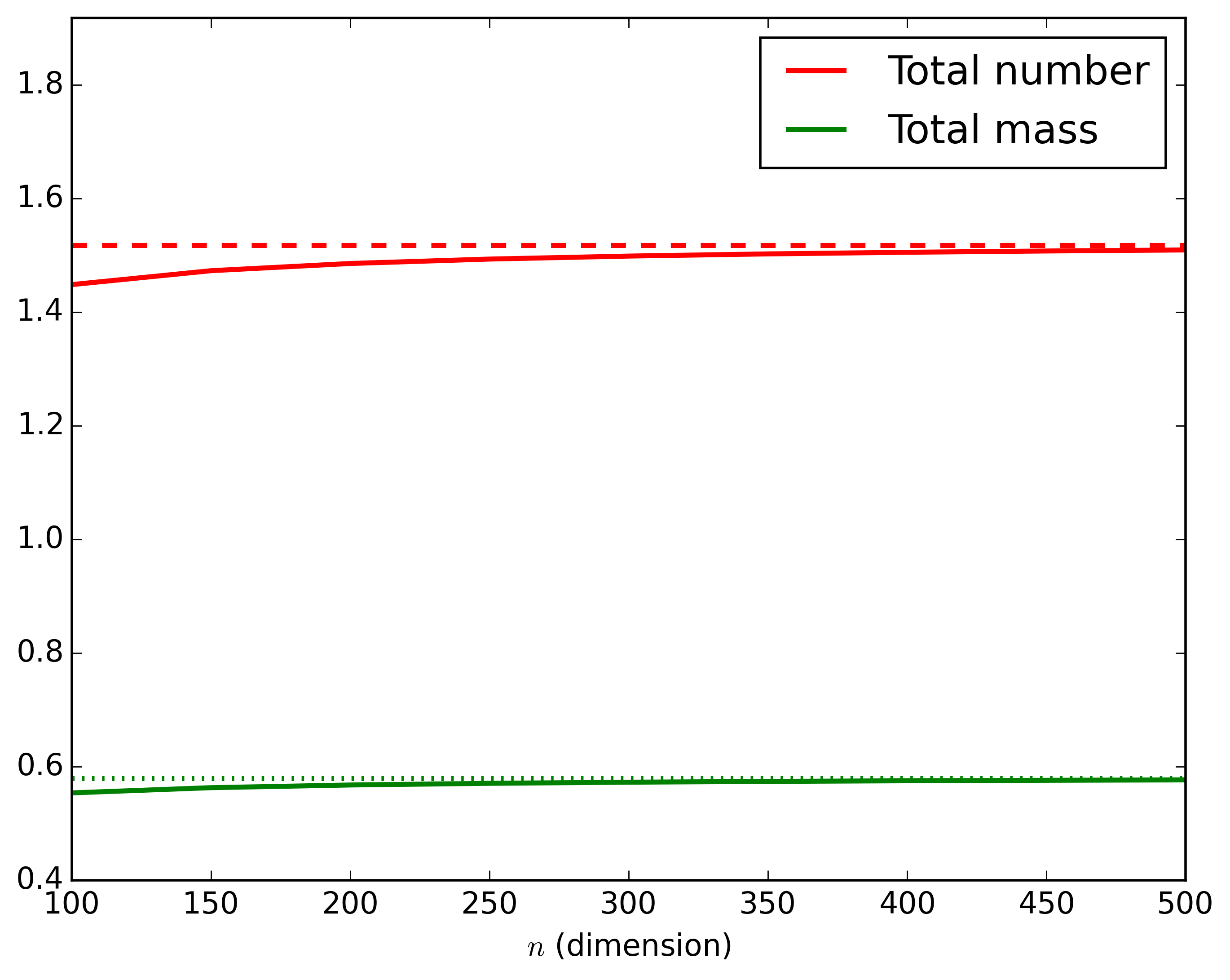}\protect}~\subfloat[]{\protect\begin{centering}
\protect\includegraphics[scale=0.5]{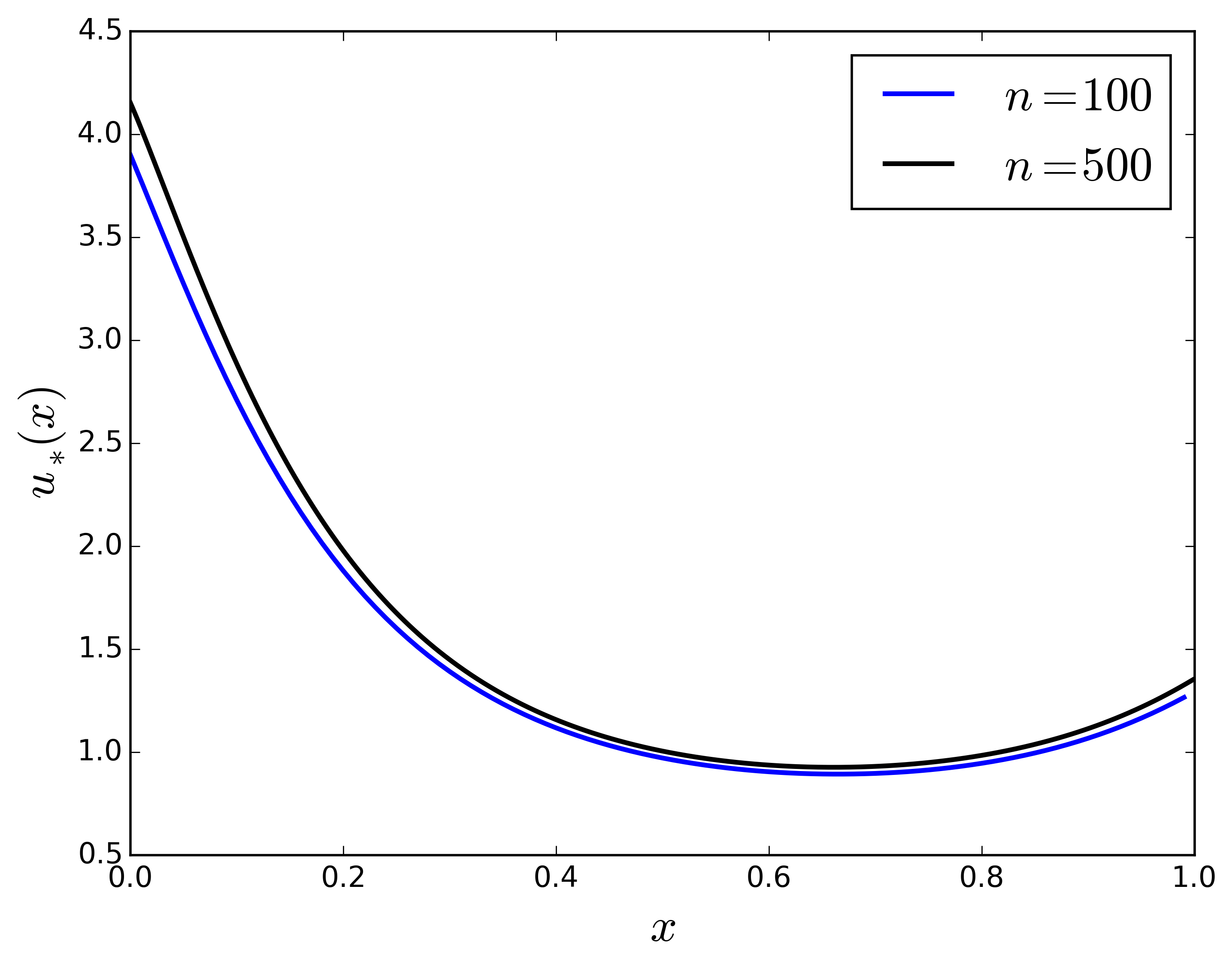}\protect
\par\end{centering}

}\protect\caption{\label{fig:Change-in-zeroth}Change in zeroth and first moments with
increasing dimension of the approximate space $\mathcal{X}_{n}$.
a) Change in the total number and the total mass of the flocs with
respect to increasing dimension $n$. Dashed red lines and dotted
green lines corresponds to the total number and the total mass of
the flocs of the steady state for $n=1000$, respectively. b) Steady
state solution for $n=100$ and $n=500$. }
\end{figure}

\section{\label{sec:Establishing-local-stability}Stability of stationary
solutions}

Studying the asymptotic behavior of solutions is a fundamental tool
for exploring the evolution equations which arise in the mathematical
modeling of real world phenomena. To this end, many mathematical methods
has been developed to describe long-term behavior of evolution equations.
For instance, for long-time behavior of linear evolution equations,
linear semigroup theoretic methods can be used to formulate physically
interpretable conditions. Furthermore, for nonlinear evolution equations,
the principle of linearized stability can be used to relate the spectrum
of the linearized infinitesimal generator to the local stability or
instability of the stationary solution. Nevertheless, investigating
the spectrum of the linearized infinitesimal generator is cumbersome
and requires advanced functional analysis techniques. In contrast
to general evolution equations, the asymptotic behavior of ordinary
differential equations are determined by the eigenvalues of the Jacobian
and well-studied. Hence, in this section we demonstrate that the approximation
scheme presented in Section \ref{sec:Theoretical-Framework} can also
be used for deriving stability conditions for stationary solutions
of the general evolution equations. Towards this end, we prove the
following stability result for general evolution equations.
\begin{cor}
\label{thm:asymptotic stability }Let $u_{*}$ denote a stationary
solution of the abstract evolution (\ref{eq:abstract evolution equation})
and $J_{A}(u_{n})$ denote the Jacobian of the approximate system
of ODEs defined in (\ref{eq:approximate IVP}) . If for all sufficiently
large $n$ the eigenvalues of $J_{A}(P_{n}u_{*})$ are strictly negative,
then $u^{*}$ is a locally asymptotically stable stationary solution
of the evolution equation (\ref{eq:abstract evolution equation}).
In particular, for every closed finite time interval $[0,\,t_{f}]$
and $\epsilon>0$, there exists $\delta>0$ such that a unique solution
of (\ref{eq:abstract evolution equation}), $u(t,\,x)$, with initial
condition $u_{0}$ fulfilling $\left\Vert u_{0}-u_{*}\right\Vert <\delta$
satisfies 
\begin{equation}
\left\Vert u(t,\,\cdot)-u_{*}\right\Vert <\epsilon\label{eq:stability inequality}
\end{equation}
for all $t\in[0,\,t_{f}]$. \end{cor}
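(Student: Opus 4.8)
The plan is to bound $\|u(t,\cdot)-u_*\|_{\mathcal{X}}$ on $[0,t_f]$ by comparing the genuine solution with the solution of the reduced system (\ref{eq:approximate IVP}), and then letting the negative-eigenvalue hypothesis supply the control of the reduced dynamics. Denoting by $u_n(t)$ the solution of (\ref{eq:approximate IVP}) with $u_n(0)=P_nu_0$ and by $u_{n,*}$ the equilibrium of $\mathcal{F}_n$ lying near $P_nu_*$, I would insert both through the triangle inequality
\begin{equation}
\|u(t,\cdot)-u_*\|_{\mathcal{X}}\le\|u(t,\cdot)-E_nu_n(t)\|_{\mathcal{X}}+\|E_nu_n(t)-E_nu_{n,*}\|_{\mathcal{X}}+\|E_nu_{n,*}-u_*\|_{\mathcal{X}},
\end{equation}
and arrange for each summand to fall below $\epsilon/3$. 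The first term is the Trotter--Kato approximation error, which by (\ref{eq:convergence of IVP solutions}) tends to $0$ uniformly on the compact interval $[0,t_f]$; the third is the steady-state approximation error, controlled by the convergence of approximate stationary solutions noted in Section \ref{sub:Approximation-scheme}; and the middle term is where the spectral hypothesis enters.

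To handle the middle term I would first convert the eigenvalue assumption into genuine nonlinear stability of the reduced system. Because $P_nu_*$ need not be an exact zero of $\mathcal{F}_n$, I would pass to the nearby true equilibrium $u_{n,*}$, observing that the strict negativity of the spectrum of $J_A(P_nu_*)$ transfers to $J_A(u_{n,*})$ for large $n$ by continuity of the Jacobian together with $P_nu_*\to u_{n,*}$. The principle of linearized stability for finite systems of ODEs then furnishes, at each such fixed $n$, a neighborhood of $u_{n,*}$ and a constant $K_n$ with $\|u_n(t)-u_{n,*}\|_{\mathbb{R}^n}\le K_n\|u_n(0)-u_{n,*}\|_{\mathbb{R}^n}$ for \emph{all} $t\ge0$ (the eigenvalue condition is norm-independent, so this may be read in $\|\cdot\|_{\mathbb{R}^n}$). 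Since $E_n$ is an isometry into $\mathcal{X}$---directly from (\ref{eq:reals to space}) and the definition of $\|\cdot\|_{\mathbb{R}^n}$---the middle summand equals $\|u_n(t)-u_{n,*}\|_{\mathbb{R}^n}$, which is small once $u_n(0)=P_nu_0$ is close to $u_{n,*}$; and the bound $\|P_nu_0-u_{n,*}\|_{\mathbb{R}^n}\le\|P_n\|\,\|u_0-u_*\|_{\mathcal{X}}+\|P_nu_*-u_{n,*}\|_{\mathbb{R}^n}$ shows this is achieved by taking $n$ large and $\delta$ small.

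The final assembly must respect the order of quantifiers: given $\epsilon$ and $t_f$, I would first fix $n$ large enough to drive the first and third summands below $\epsilon/3$ and to activate the linearized-stability argument, and only then choose $\delta$ so that the middle summand falls below $\epsilon/3$ for every admissible $u_0$. The main obstacle is the coordination of the two limiting processes hidden in this ordering. On the one hand the reduced-system bound is valid for all $t\ge0$; on the other, the Trotter--Kato error is controlled only on compact time intervals, so the estimate on $u(t,\cdot)$ cannot be propagated past a fixed horizon---which is precisely why the conclusion is stated on $[0,t_f]$ rather than as genuine $t\to\infty$ stability. Moreover, since (\ref{eq:convergence of IVP solutions}) is stated for a fixed initial datum while $\delta$ (hence $u_0$) is chosen only after $n$ is fixed, making the choice of $n$ legitimately independent of $u_0$ requires the Trotter--Kato convergence to be uniform for initial data in a neighborhood of $u_*$ (equivalently, absorbing the $u_0$-dependence of the first term through continuous dependence of the limit flow near $u_*$); establishing this local uniformity is the crux of the argument.
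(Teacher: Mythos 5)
Your plan is, at bottom, the same proof the paper gives: control $\|u(t,\cdot)-u_*\|_{\mathcal{X}}$ on $[0,t_f]$ by a Trotter--Kato approximation error, controlled via (\ref{eq:convergence of IVP solutions}), plus a term governed by finite-dimensional linearized stability of the reduced system (\ref{eq:approximate IVP}), glued together by the triangle inequality and with the quantifiers ordered exactly as you state (first $n$, then $\delta$). Where you differ, you are more careful than the paper, and at precisely the spots where its own argument is loose. The paper uses a two-term split, (\ref{eq:smth one}) and (\ref{eq:smth two}): it treats $P_Mu_*$ itself as an equilibrium of the reduced ODE system and writes the identities $\left\Vert u_M(t)-P_Mu_*\right\Vert _{\mathbb{R}^M}=\left\Vert E_Mu_M(t)-u_*\right\Vert _{\mathcal{X}}$ and $\left\Vert P_Mu_0-P_Mu_*\right\Vert _{\mathbb{R}^M}=\left\Vert u_0-u_*\right\Vert _{\mathcal{X}}$, which tacitly assume $E_MP_Mu_*=u_*$ (and likewise for $u_0$); in fact $E_MP_Mv=\iota_M\pi_Mv$ is only the projection of $v$ onto $\mathcal{X}_M$, and $\mathcal{F}_M(P_Mu_*)$ is only approximately zero. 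Your three-term split and your passage to a genuine nearby equilibrium $u_{n,*}$ repair exactly this looseness --- though note that the existence of $u_{n,*}$ near $P_nu_*$ is itself a claim requiring proof (e.g.\ a quantitative inverse function theorem using smallness of $\mathcal{F}_n(P_nu_*)$), and ``eigenvalues strictly negative for each large $n$'' does not by itself supply the uniform spectral gap such an argument wants.

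The uniformity issue you call the crux is real, and the paper's proof silently elides it: the Trotter--Kato convergence behind (\ref{eq:smth one}) is pointwise in the initial datum, so the paper's $n_\epsilon$ --- and hence $M$ and $\delta$ --- depends on $u_0$, which is only quantified afterwards; this is the same circularity you identify. In the linear setting it is repairable at small cost: apply the convergence at the single datum $u_*$ and absorb the difference $u_0-u_*$ using the uniform bound $\left\Vert T(t)\right\Vert ,\left\Vert E_nT_n(t)P_n\right\Vert \le M'e^{wt}$ on $[0,t_f]$, which makes the $u_0$-dependent terms $O(\delta)$ uniformly in $n$. For nonlinear $\mathcal{F}$ --- the case in which the corollary is actually invoked in Section \ref{sec:Application-to-nonlinear} --- some local-uniformity or continuous-dependence hypothesis on the approximation scheme is genuinely needed, as you say. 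So your proposal does not open a gap relative to the paper; it names one the paper already has, and it is the more honest account of what remains to be proven.
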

\begin{proof}
Since the infinitesimal generator approximation scheme, presented
in Section \ref{sub:Approximation-scheme}, is convergent, for every
given $\epsilon>0$ and finite time interval $[0,\,t_{f}]$ there
exist $n_{\epsilon}\in\mathbb{N}$ such that for $n\ge n_{\epsilon}$,
\begin{equation}
\left\Vert u(t,\,\cdot)-E_{n}u_{n}(t)\right\Vert <\epsilon/2\label{eq:smth one}
\end{equation}
for all $t\in[0,\,t_{f}]$ (where the bounded linear function $E_{n}$
is defined as in (\ref{eq:reals to space})). Moreover, the eigenvalues
of $J_{A}(P_{n}u_{*})$ are strictly negative for all sufficiently
large $n$. This in turn implies that $P_{M}u_{*}$ is a locally asymptotically
stable solution of (\ref{eq:approximate IVP}) for some $M\ge n_{\epsilon}$.
That is, for given $\epsilon>0$ there is $\delta>0$ such that 
\begin{equation}
\left\Vert u_{M}(t,\,\cdot)-P_{M}u_{*}\right\Vert _{\mathbb{R}^{M}}=\left\Vert E_{M}u_{M}(t,\,\cdot)-u_{*}\right\Vert _{\mathcal{X}}<\epsilon/2\label{eq:smth two}
\end{equation}
for all $t>0$ and for all $u_{0}$ such that $\left\Vert P_{M}u_{0}-P_{M}u_{*})\right\Vert _{\mathbb{R}^{M}}=\left\Vert u_{0}-u_{*}\right\Vert _{\mathcal{X}}<\delta$
(see for instance \citep[\S 23]{arnold1992ordinary}). Consequently,
combining (\ref{eq:smth one}) and (\ref{eq:smth two}) yields 
\[
\left\Vert u(t,\,\cdot)-u_{*}\right\Vert \le\left\Vert u(t,\,x)-E_{M}u_{M}(t,\,\cdot)\right\Vert +\left\Vert E_{M}u_{M}(t,\,\cdot)-u_{*}\right\Vert <\epsilon
\]
for all $t\in[0,\,t_{f}]$ and for all $u_{0}$ such that $\left\Vert u_{0}-u_{*}\right\Vert <\delta$.
\end{proof}
Having the required corollary in hand, in subsequent sections we apply
it to two different examples from size-structured population modeling.
In Section \ref{sub:Sinko-Streifer-model}, we derive conditions for
local stability of the stationary solutions of linear Sinko-Streifer
equation (\ref{eq:sinko-streifer}). In Section \ref{sub:Population-balance-equation},
we derive conditions for local stability of the stationary solution
of the nonlinear population balance equation defined in (\ref{eq: agg and growth model}).

\subsection{\label{sub:Sinko-Streifer-model}Sinko-Streifer model}

To prove the first statement of Corollary \ref{thm:asymptotic stability },
we use the well-known Gershgorin theorem for locating eigenvalues
of a matrix. The Gershgorin theorem states that each eigenvalue of
$A$ lies in one of the the circular areas, called Gershgorin disks,
in the complex plane that is centered at the $i$th diagonal element
and whose radius is $R_{i}$,
\[
\left\{ z\in\mathbb{C}\,:\,\left|z-a_{ii}\right|\le R_{i}\right\} \,,
\]
where
\[
R_{i}=\sum_{j=1,\,j\ne i}^{n}|a_{ji}|\,.
\]
Consequently, by applying the Gershgorin theorem to columns of the
matrix $\mathcal{G}_{n}$, the approximate ODE system for Sinko-Streifer
derived in Section \ref{sub:Numerical-convergence-results}, yields
that its eigenvalues are located in the following Gershgorin disks
\[
\left\{ z\in\mathbb{C}\,:\,\left|z+\frac{1}{\Delta x}g(x_{i}^{n})+\mu(x_{i}^{n})\right|\le q(x_{i}^{n})+\frac{1}{\Delta x}g(x_{i}^{n})\right\} \,
\]
for $i=1,\cdots,n$. Therefore, a sufficient condition for eigenvalues
of the matrix $\mathcal{G}_{n}$ to be strictly negative is 
\[
q(x_{i}^{n})<\mu(x_{i}^{n})
\]
for each $i=1,\cdots,n$. Hence, the condition 
\[
q(x)-\mu(x)<0
\]
for all $x\in(0,\,\overline{x})$ ensures the local stability of the
non-trivial stationary solution of Sinko-Streifer equation, which
is also in an agreement with the stability conditions derived in \citep[Condition 2]{mirzaev2015stability}.
We summarize the results of this subsection in the following proposition.
\begin{prop}
Let $u_{*}$ be a stationary solution of the Sinko-Streifer model
defined in (\ref{eq:sinko-streifer}). Moreover, assume that 

\begin{equation}
q(x)-\mu(x)<0\label{eq:first condition-1}
\end{equation}
 for all $x\in[0,\,\overline{x}]$, then the stationary solution $u_{*}$
is locally stable in the sense of Corollary \ref{thm:asymptotic stability }. 
\end{prop}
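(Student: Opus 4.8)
The plan is to reduce the stability of $u_{*}$ to a purely matrix-theoretic statement about the approximate generator and then read off the sign of its eigenvalues by a Gershgorin argument. First I would exploit the fact that the Sinko--Streifer equation (\ref{eq:sinko-streifer}) is \emph{linear}: the approximating system (\ref{eq:approximate IVP}) is then simply $\dot u_{n}=\mathcal{G}_{n}u_{n}$ with $\mathcal{G}_{n}$ the matrix in (\ref{eq:approximate G}). Its Jacobian is consequently constant and equal to $\mathcal{G}_{n}$ itself, so $J_{A}(P_{n}u_{*})=\mathcal{G}_{n}$ for every $n$. By Corollary \ref{thm:asymptotic stability }, it therefore suffices to prove that, for all sufficiently large $n$, every eigenvalue of $\mathcal{G}_{n}$ has strictly negative real part.

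To control the spectrum I would apply Gershgorin's theorem to the \emph{columns} of $\mathcal{G}_{n}$; this is the correct choice because the renewal coefficients $q(x_{i}^{n})$ all sit in the first row and would badly inflate the row radii, whereas each column contains at most one $q$ term. Reading off (\ref{eq:approximate G}) and using the standing positivity of $g$, $q$, $\mu$, for an interior index $2\le i\le n-1$ the $i$th column has diagonal entry $-\tfrac{1}{\Delta x}g(x_{i}^{n})-\mu(x_{i}^{n})$ and off-diagonal mass $q(x_{i}^{n})+\tfrac{1}{\Delta x}g(x_{i}^{n})$ (the $q(x_{i}^{n})$ inherited from the first row plus the subdiagonal growth term). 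The associated disk is thus centered at $-\tfrac{1}{\Delta x}g(x_{i}^{n})-\mu(x_{i}^{n})$ with radius $q(x_{i}^{n})+\tfrac{1}{\Delta x}g(x_{i}^{n})$, so its rightmost real point is exactly $q(x_{i}^{n})-\mu(x_{i}^{n})$. Under hypothesis (\ref{eq:first condition-1}) this quantity is strictly negative at every grid point, placing each interior disk in the open left half-plane.

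The step requiring a little care is the treatment of the two boundary columns, which do not fit the interior pattern. The first column carries the extra term $+q(x_{1}^{n})$ in its diagonal entry while losing the first-row contribution to its radius, and the last column has no subdiagonal entry and hence radius only $q(x_{n}^{n})$. I would verify directly that both disks are nonetheless contained in the interior-form disk $\{z:|z+\tfrac{1}{\Delta x}g(x_{i}^{n})+\mu(x_{i}^{n})|\le q(x_{i}^{n})+\tfrac{1}{\Delta x}g(x_{i}^{n})\}$: for the first column the distance between centers is $q(x_{1}^{n})$ and the smaller radius compensates exactly, while the last column shares the same center with a strictly smaller radius. Each of these enclosing disks again has rightmost real point $q(x_{i}^{n})-\mu(x_{i}^{n})<0$. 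Collecting all $n$ disks, every eigenvalue of $\mathcal{G}_{n}$ then has real part at most $\max_{i}\{q(x_{i}^{n})-\mu(x_{i}^{n})\}<0$, and because (\ref{eq:first condition-1}) holds on the whole of $[0,\overline{x}]$ this negativity is uniform across all $n$. Feeding this into Corollary \ref{thm:asymptotic stability } yields the asserted local stability of $u_{*}$. I expect the only genuine obstacle to be making the two endpoint-column containments precise, rather than any deep analytic difficulty.
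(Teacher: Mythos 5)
Your proposal is correct and follows essentially the same route as the paper: reduce to the constant Jacobian $\mathcal{G}_{n}$ of the linear approximating system, apply Gershgorin's theorem to the \emph{columns} of $\mathcal{G}_{n}$ so that each disk has rightmost point $q(x_{i}^{n})-\mu(x_{i}^{n})<0$, and invoke Corollary \ref{thm:asymptotic stability }. Your explicit verification of the two boundary columns is the only addition — the paper silently subsumes them into the common disk form $\left\{ z\,:\,\left|z+\tfrac{1}{\Delta x}g(x_{i}^{n})+\mu(x_{i}^{n})\right|\le q(x_{i}^{n})+\tfrac{1}{\Delta x}g(x_{i}^{n})\right\}$ — and your containment checks are exactly the justification that step needs.
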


\subsection{\label{sub:Population-balance-equation}Population balance equation}

Since the approximate system for the microbial flocculation model
is nonlinear, we linearize the system around its stationary solutions.
Let $u_{*}\in L^{1}(0,\,\overline{x})$ be a stationary solution of
(\ref{eq: agg and growth model}) and denote the projection of the
stationary solution $u_{*}$ onto $\mathbb{R}^{n}$ by $\alpha=P_{n}u_{*}=[\alpha_{1},\cdots,\alpha_{n}]^{T}$,
then the Jacobian of the approximate operator $\mathcal{F}_{n}$ defined
in (\ref{eq:numerical approximation}) can be written as
\begin{equation}
J_{\mathcal{F}}(\alpha)=\mathcal{G}_{n}+J_{\mathcal{A}}(\alpha)+J_{\mathcal{B}}(\alpha)\,,\label{eq:jacobian of F}
\end{equation}
where $\mathcal{G}_{n}$ is defined in (\ref{eq:approximate G}),
\begin{alignat*}{1}
J_{\mathcal{A}}(\alpha) & =\begin{pmatrix}-\alpha_{1}k_{a}(x_{1}^{n},\,x_{1}^{n})\Delta x & -\alpha_{1}k_{a}(x_{1}^{n},\,x_{2}^{n})\Delta x & \cdots & -\alpha_{1}k_{a}(x_{1}^{n},\,x_{n-1}^{n})\Delta x & 0\\
-\alpha_{2}k_{a}(x_{2}^{n},\,x_{1}^{n})\Delta x & \cdots & -\alpha_{2}k_{a}(x_{2}^{n},\,x_{n-2}^{n})\Delta x & 0 & 0\\
\vdots & \vdots & \ddots & \vdots & \vdots\\
-\alpha_{N-1}k_{a}(x_{n-1}^{n},\,x_{1}^{n})\Delta x & 0 & \cdots & 0 & 0\\
0 & 0 & \cdots & 0 & 0
\end{pmatrix}\\
 & \quad+\begin{pmatrix}-\sum_{j=1}^{n-1}k_{a}(x_{1}^{n},\,x_{j}^{n})\alpha_{j}\Delta x & 0 & \cdots & 0 & 0\\
\alpha_{1}k_{a}(x_{1}^{n},\,x_{1}^{n})\Delta x & -\sum_{j=1}^{n-2}k_{a}(x_{2}^{n},\,x_{j}^{n})\alpha_{j}\Delta x & 0 & \cdots & 0\\
\alpha_{2}k_{a}(x_{1}^{n},\,x_{2}^{n})\Delta x & \alpha_{1}k_{a}(x_{2}^{n},\,x_{1}^{n})\Delta x & \ddots & 0 & \vdots\\
\vdots & \vdots & \ddots & -\sum_{j=1}^{1}k_{a}(x_{n-1}^{n},\,x_{j}^{n})\alpha_{j}\Delta x & 0\\
\alpha_{n-1}k_{a}(x_{1}^{n},\,x_{n-1}^{n})\Delta x & \alpha_{n-2}k_{a}(x_{2}^{n},\,x_{n-2}^{n})\Delta x & \cdots & \alpha_{1}k_{a}(x_{n-1}^{n},\,x_{1}^{n})\Delta x & 0
\end{pmatrix}\,,
\end{alignat*}
and 
\[
J_{\mathcal{B}}(\alpha)=\begin{pmatrix}-\frac{1}{2}k_{f}(x_{1}^{n}) & \Gamma(x_{1}^{n};x_{2}^{n})k_{f}(x_{2}^{n})\Delta x & \Gamma(x_{1}^{n};x_{3}^{n})k_{f}(x_{3}^{n})\Delta x & \cdots & \Gamma(x_{1}^{n};x_{n}^{n})k_{f}(x_{n}^{n})\Delta x\\
0 & -\frac{1}{2}k_{f}(x_{2}^{n}) & \Gamma(x_{2}^{n};x_{3}^{n})k_{f}(x_{3}^{n})\Delta x & \cdots & \Gamma(x_{2}^{n};x_{n}^{n})k_{f}(x_{n}^{n})\Delta x\\
\vdots & 0 & \ddots & \ddots & \vdots\\
0 & \cdots & 0 & -\frac{1}{2}k_{f}(x_{n-1}^{n}) & \Gamma(x_{n-1}^{n};x_{n}^{n})k_{f}(x_{n}^{n})\Delta x\\
0 & 0 & \cdots & 0 & -\frac{1}{2}k_{f}(x_{n}^{n})
\end{pmatrix}
\]

\begin{figure}
\centering{}\subfloat[\label{fig:evals for 10}]{\protect\includegraphics[width=0.4\columnwidth]{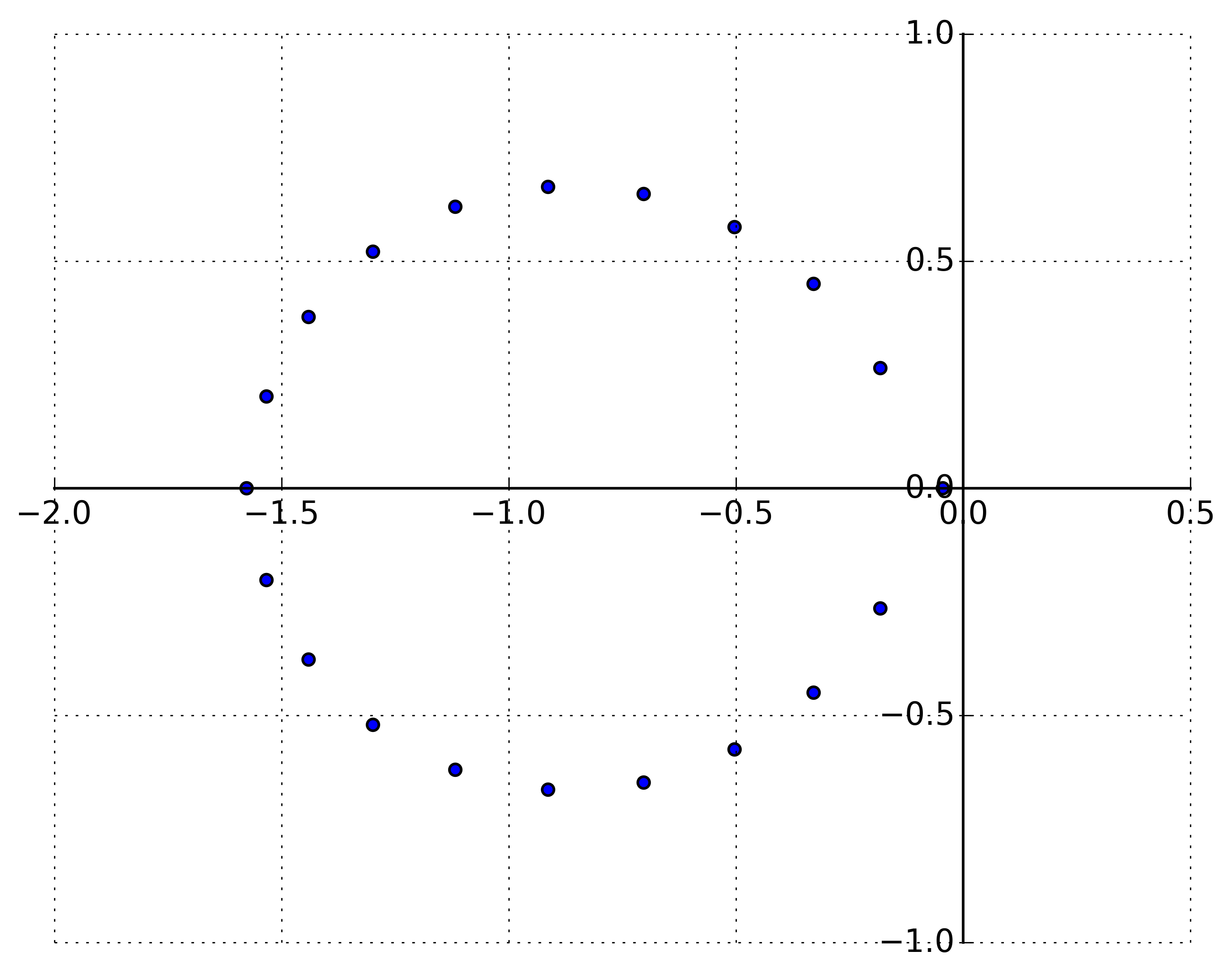}}~\subfloat[\label{fig:evals for 50}]{\protect\includegraphics[width=0.4\columnwidth]{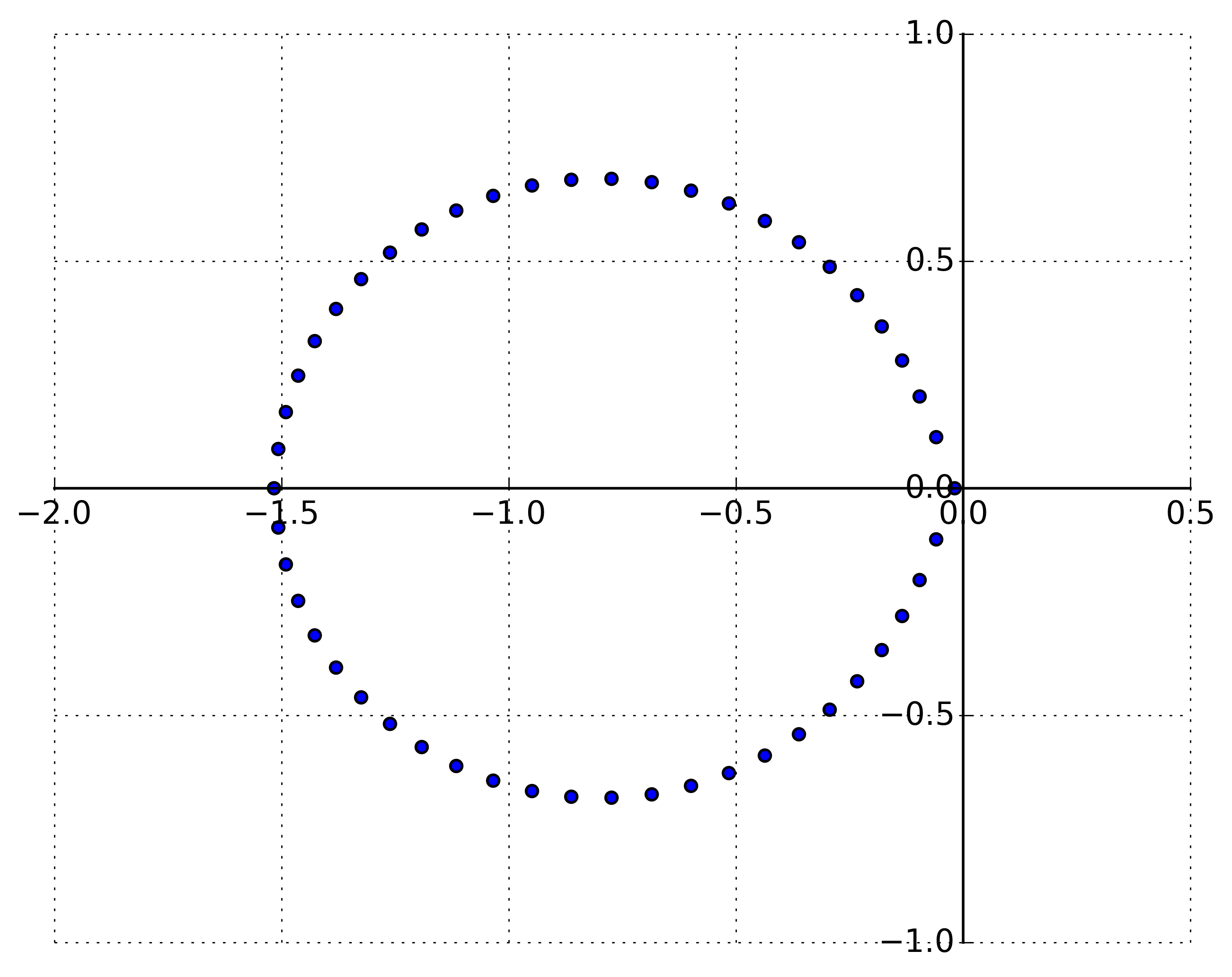}}\\
\subfloat[\label{fig:evals for 200}]{\protect\includegraphics[width=0.4\columnwidth]{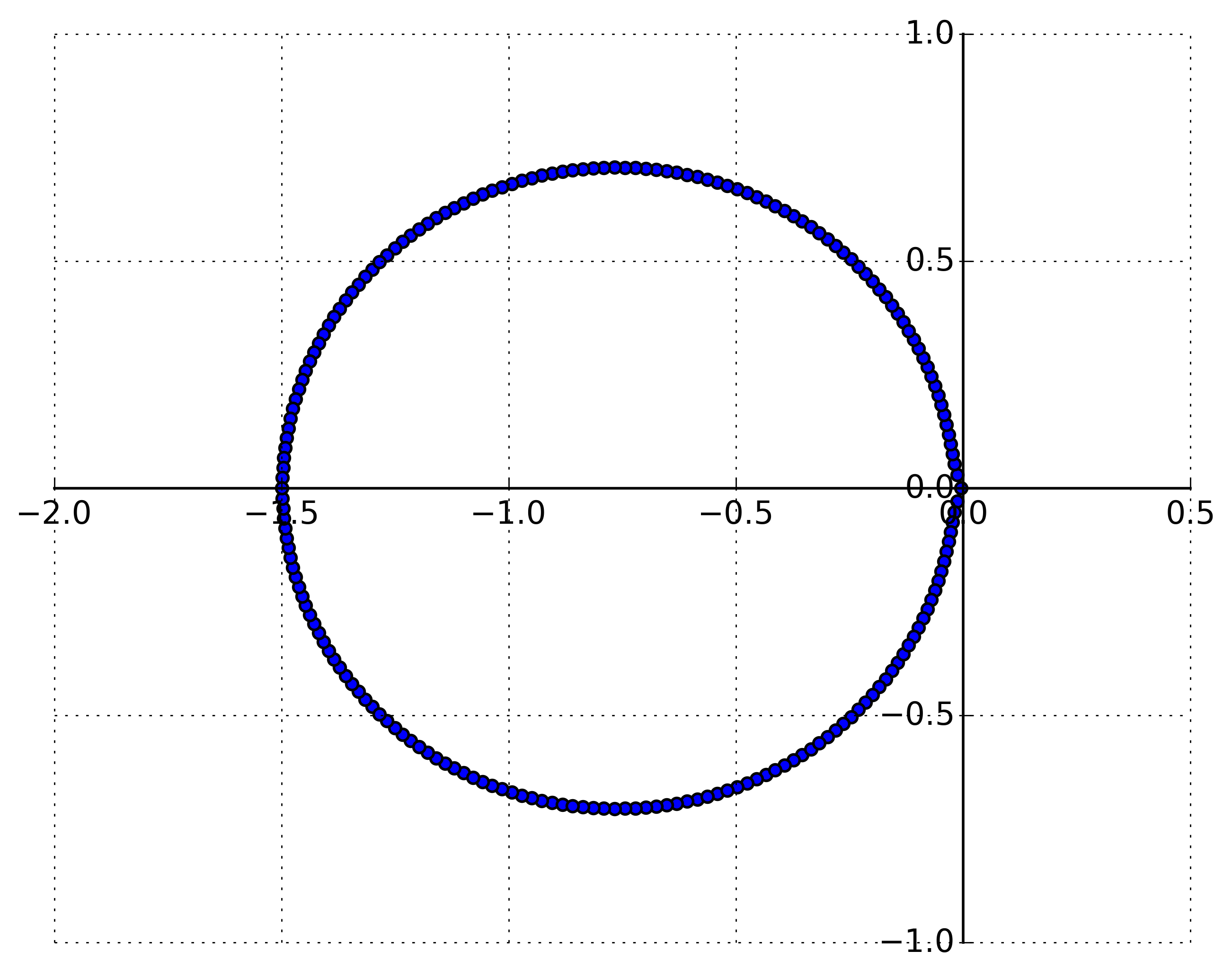}}~\subfloat[\label{fig:convergence of evals}]{\protect\includegraphics[width=0.4\columnwidth]{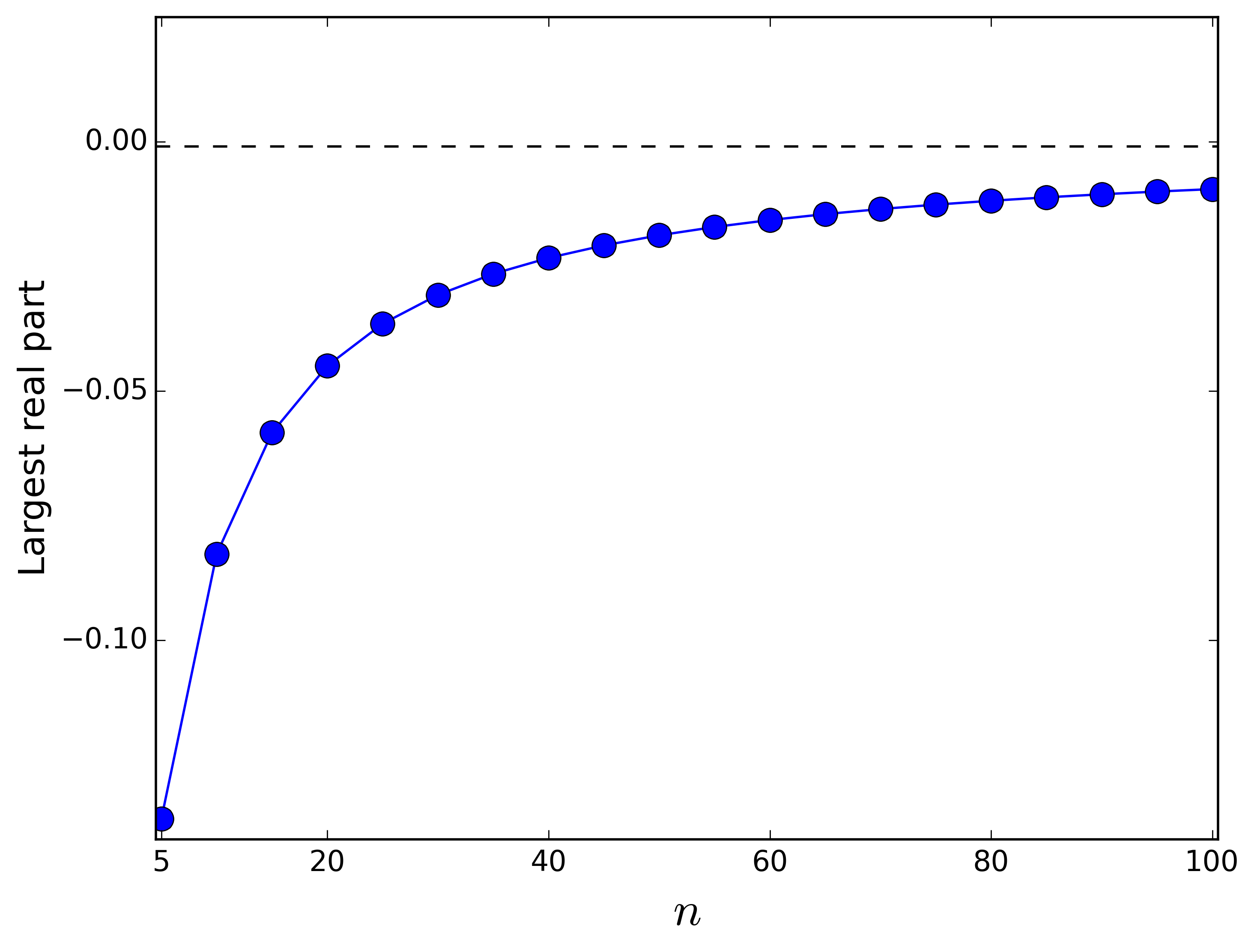}

}\protect\caption{\label{fig:Eigenvalues}Eigenvalues of the Jacobian $J_{\mathcal{F}}(\alpha)$
multiplied by $\Delta x$ for the steady state illustrated in Figure
\ref{fig:Steady-state-solution}. a) Eigenvalues of the Jacobian plotted
in the complex plane for $n=20$. b) Eigenvalues of the Jacobian plotted
in the complex plane for $n=50$. c) Eigenvalues of the Jacobian plotted
in the complex plane for $n=200$. d) Change in the rightmost eigenvalue
for increasing $n$. Dashed black line corresponds to the rightmost
eigenvalue of the Jacobian for $n=1000$.}
\end{figure}

To bound the eigenvalues of $J_{\mathcal{F}}(\alpha)$ again we use
Gershgorin theorem. Consequently, the centers and the radii of Gershgorin
disks are given by 
\[
a_{ii}=-\frac{1}{\Delta x}g(x_{i}^{n})-\mu(x_{i}^{n})-\frac{1}{2}k_{f}(x_{i}^{n})-\alpha_{i}k_{a}(x_{i}^{n},\,x_{i}^{n})\Delta x-\sum_{j=1}^{n-i}k_{a}(x_{i}^{n},\,x_{j}^{n})\alpha_{j}\Delta x
\]
and 
\[
R_{i}\le\frac{1}{\Delta x}g(x_{i}^{n})+q(x_{i}^{n})+\sum_{j=1}^{i-1}\Gamma(x_{j}^{n};x_{i}^{n})k_{f}(x_{i}^{n})\Delta x+\sum_{j=1}^{n-i}\alpha_{j}k_{a}(x_{j}^{n},\,x_{i}^{n})\Delta x+\sum_{j=1,\,j\ne i}^{n-i}\alpha_{j}k_{a}(x_{i}^{n},\,x_{j}^{n})\Delta x\,,
\]
respectively. Consequently, if we can show that 
\begin{equation}
\left|a_{ii}\right|>R_{i}\qquad\text{for each }i\in\left\{ 1,\dots,n\right\} \,,\label{eq:center larger than radius}
\end{equation}
then each of the Gershgorin disks lie strictly on the left side of
the complex plane. To this end, inequality (\ref{eq:center larger than radius})
can be simplified as 
\begin{equation}
\mu(x_{i}^{n})+\frac{1}{2}k_{f}(x_{i}^{n})>q(x_{i}^{n})+\sum_{j=1}^{i-1}\Gamma(x_{j}^{n};x_{i}^{n})k_{f}(x_{i}^{n})\Delta x+\sum_{j=1}^{n-i}\alpha_{j}k_{a}(x_{j}^{n},\,x_{i}^{n})\Delta x\label{eq:approximate inequality}
\end{equation}
for each $i\in\left\{ 1,\cdots,n\right\} $. Accordingly, taking the
limit of (\ref{eq:approximate inequality}) as $n\to\infty$ yields
\begin{equation}
\mu(x)+\frac{1}{2}k_{f}(x)>q(x)+\int_{0}^{x}\Gamma(y,\,x)k_{f}(x)\,dy+\int_{0}^{\overline{x}-x}k_{a}(x,\,y)u_{*}(y)\,dy\label{eq:precise inequality}
\end{equation}
for all $x\in[0,\,\overline{x}]$ and together with the number conservation
requirement (\ref{eq:number conservation}) implies 
\[
q(x)+\frac{1}{2}k_{f}(x)-\mu(x)+\int_{0}^{\overline{x}-x}k_{a}(x,\,y)u_{*}(y)\,dy<0
\]
for all $x\in[0,\,\overline{x}]$. Conversely, note that the integral
approximations in (\ref{eq:approximate inequality}) are right Reimann
sums. Therefore, if the functions $\Gamma(y,\,x)$ and $k_{a}(x,\,y)u_{*}(y)$
are decreasing in $y$ then integral approximations in (\ref{eq:approximate inequality})
are under-approximations of the integrals in (\ref{eq:precise inequality}).
Thus, the inequality stated in (\ref{eq:precise inequality}) ensures
that the eigenvalues of the Jacobian $J_{\mathcal{F}}(\alpha)$ are
strictly negative for all sufficiently large $n$. Now, we are in
a position to summarize the results of this section in the following
proposition. 
\begin{prop}
\label{prop:stability conditions flocculation model}Let $u_{*}$
be a stationary solution of the microbial flocculation model (\ref{eq: agg and growth model}).
Moreover, if 

\begin{equation}
q(x)+\frac{1}{2}k_{f}(x)-\mu(x)+\int_{0}^{\overline{x}-x}k_{a}(x,\,y)u_{*}(y)\,dy<0\label{eq:first condition}
\end{equation}
 for all $x\in[0,\,\overline{x}]$ and 

\begin{equation}
\partial_{y}\left(k_{a}(x,\,y)u_{*}(y)\right)\le0\text{ and }\partial_{y}\Gamma(y,\,x)\le0\label{eq:second condition}
\end{equation}
for all $x\in[0,\,\overline{x}]$ and $y\in[0,\,\overline{x}]$, then
stationary solution $u_{*}$ is locally stable in the sense of Corollary
\ref{thm:asymptotic stability }. 
\end{prop}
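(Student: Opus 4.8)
The plan is to invoke Corollary \ref{thm:asymptotic stability } and thereby reduce the stability of $u_*$ to a purely finite-dimensional, matrix-theoretic question: it suffices to prove that for all sufficiently large $n$ every eigenvalue of the Jacobian $J_{\mathcal{F}}(P_n u_*)$ of the approximate system (\ref{eq:approximate IVP}) has strictly negative real part. Writing $\alpha = P_n u_* = [\alpha_1,\dots,\alpha_n]^T$, I would use the block decomposition $J_{\mathcal{F}}(\alpha)=\mathcal{G}_n+J_{\mathcal{A}}(\alpha)+J_{\mathcal{B}}(\alpha)$ recorded in (\ref{eq:jacobian of F}) and localize the spectrum with the Gershgorin theorem applied columnwise (equivalently, to $J_{\mathcal{F}}(\alpha)^T$, which has the same eigenvalues).

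First I would read off the diagonal entry $a_{ii}$ and the column radius $R_i=\sum_{j\ne i}|a_{ji}|$ by collecting the corresponding contributions from the three blocks. The diagonal entry is strictly negative, namely $a_{ii}=-\frac{1}{\Delta x}g(x_i^n)-\mu(x_i^n)-\frac12 k_f(x_i^n)-\alpha_i k_a(x_i^n,x_i^n)\Delta x-\sum_{j=1}^{n-i}k_a(x_i^n,x_j^n)\alpha_j\Delta x$, so each Gershgorin disk is centered on the negative real axis. The key observation is that if $|a_{ii}|>R_i$ for every $i$, then the rightmost point $a_{ii}+R_i$ of each disk is strictly negative, and hence every eigenvalue lies in the open left half-plane. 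Carrying out the subtraction $|a_{ii}|-R_i>0$, the singular terms $\frac{1}{\Delta x}g(x_i^n)$ coming from the transport block $\mathcal{G}_n$ cancel between center and radius, and what remains is exactly the diagonal-dominance inequality (\ref{eq:approximate inequality}).

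Next I would pass to the continuum limit. The two sums on the right of (\ref{eq:approximate inequality}) are Riemann sums for $\int_0^x \Gamma(y,x)k_f(x)\,dy$ and $\int_0^{\overline{x}-x}k_a(x,y)u_*(y)\,dy$; letting $n\to\infty$ produces the pointwise inequality (\ref{eq:precise inequality}), and invoking the number-conservation identity (\ref{eq:number conservation}), which collapses $\int_0^x\Gamma(y,x)\,dy$ to unity, rewrites it as the stated hypothesis (\ref{eq:first condition}). The logical direction that actually has to be proved, however, runs the other way: I must show that (\ref{eq:first condition}) forces the finite-$n$ inequality (\ref{eq:approximate inequality}) to hold for all large $n$, not merely in the limit.

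The hard part is exactly this reversal, and it is where the monotonicity hypotheses (\ref{eq:second condition}) do the work. Since $\{x_j^n\}$ is a uniform grid, the sums in (\ref{eq:approximate inequality}) are right Riemann sums; if $y\mapsto \Gamma(y,x)$ and $y\mapsto k_a(x,y)u_*(y)$ are nonincreasing, each right endpoint attains the minimum on its subinterval, so the sums under-approximate the corresponding integrals. Consequently the right-hand side of (\ref{eq:approximate inequality}) is bounded above by the right-hand side of (\ref{eq:precise inequality}) evaluated at $x=x_i^n$, which the hypothesis keeps strictly below $\mu(x_i^n)+\frac12 k_f(x_i^n)$, i.e. the left-hand side of (\ref{eq:approximate inequality}); chaining the two estimates yields (\ref{eq:approximate inequality}) at every grid point for every $n$. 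This gives $|a_{ii}|>R_i$ throughout, places the whole spectrum of $J_{\mathcal{F}}(P_n u_*)$ strictly in the left half-plane for all $n$, and closes the argument through Corollary \ref{thm:asymptotic stability }. The subtlety to watch is uniformity: one needs the strict gap in (\ref{eq:first condition}) to survive evaluation at the moving grid points as $\Delta x\to 0$, which follows from continuity of all the rates on the compact interval $[0,\overline{x}]$.
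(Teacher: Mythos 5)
Your proposal is correct and follows essentially the same route as the paper: columnwise Gershgorin localization of the spectrum of $J_{\mathcal{F}}(P_n u_*)$, cancellation of the $\frac{1}{\Delta x}g(x_i^n)$ terms to reduce diagonal dominance to the discrete inequality (\ref{eq:approximate inequality}), the monotonicity hypotheses (\ref{eq:second condition}) to show the right Riemann sums under-approximate the integrals in (\ref{eq:precise inequality}), and the number-conservation identity (\ref{eq:number conservation}) to connect that inequality to the hypothesis (\ref{eq:first condition}), closing with Corollary \ref{thm:asymptotic stability }. If anything, you are slightly more careful than the paper in stating the logical direction explicitly (hypothesis $\Rightarrow$ finite-$n$ inequality, not the limit computation) and in flagging the uniformity of the strict gap over the grid points.
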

To illustrate the utility of the framework developed in this section
we applied our approach to the model rates given in Section \ref{sub:Numerical-implementation-and}
for generation of Figure \ref{fig:Steady-state-solution}. The Beta
distribution used for the post-fragmentation function $\Gamma$ is
not monotonically decreasing, and thus it does not satisfy the conditions
of Proposition \ref{prop:stability conditions flocculation model}.
However, Figure \ref{fig:Eigenvalues}a-c illustrates that the model
rates satisfy the conditions of Corollary \ref{thm:asymptotic stability }.
In fact, Figure \ref{fig:convergence of evals} depicts that for the
steady state illustrated in Figure \ref{fig:Steady-state-solution}
the eigenvalues of $J_{\mathcal{F}}(P_{n}u_{*})$ have strictly negative
real part for $n\ge5$. Therefore, as it has already been established
in Figure \ref{fig:Time-evolution-of}, this steady state solution
is locally asymptotically stable in the sense of Corollary \ref{thm:asymptotic stability }.

\section{Concluding remarks}

Our primary motivation in this paper was to show that available numerical
tools in the literature can facilitate theoretical analysis of evolution
equations. Towards this end we developed a numerical framework for
theoretical analysis of evolution equations arising in population
dynamical models. The main idea behind this framework is to approximate
generators of semigroups of evolution equations and use numerical
tools to study stability of steady states of evolution equations.
We demonstrated the utility of our approach by applying the numerical
framework to both linear and nonlinear size-structured population
models. In particular, we generated the existence and stability regions
of the steady states of the both models (which can be difficult to
obtain by using conventional analytical tools. We showed that our
numerical framework can also be used to gain insight about the local
stability of stationary solutions. Furthermore, code used for the
numerical simulations can be obtained from our Github repository under
the open source MIT License (MIT) \citep{mirzaev2015steadystate}. 

Although the stability inequality in (\ref{eq:stability inequality})
holds for all finite time intervals, behavior of the solutions as
$t\to\infty$ is unclear. Hence, we note that the local stability
of the stationary solutions specified in Corollary \ref{thm:asymptotic stability }
is not in a usual Lyapunov sense. In order to prove Lyapunov stability
of stationary solutions using the approximation scheme presented in
Section \ref{sub:Approximation-scheme}, one has to prove uniform
convergence of the approximation scheme for all $t\ge0$. Hence, as
a future plan we wish to utilize the numerical framework presented
here to establish Lyapunov stability of stationary solutions of general
evolution equations.

\section*{Acknowledgements}

Funding for this research was supported in part by grants NSF-DMS
1225878 and NIH-NIGMS 2R01GM069438-06A2. This work utilized the Janus
supercomputer, which is supported by the National Science Foundation
(award number CNS-0821794) and the University of Colorado Boulder.
The Janus supercomputer is a joint effort of the University of Colorado
Boulder, the University of Colorado Denver and the National Center
for Atmospheric Research. 

\bibliographystyle{apalike}
\bibliography{numerical_equilibrium}

\end{document}